\definecolor{qqqqcc}{rgb}{0.,0.,0.8}
\definecolor{ttttff}{rgb}{0.2,0.2,1.}
\definecolor{xdxdff}{rgb}{0.49,0.49,1.}
\newtheorem{lemma}{Lemma}
\newtheorem{theorem}{Theorem}
\newtheorem{remark}{Remark}
\newtheorem{definition}{Definition}
\newtheorem{example}{Example}
\newtheorem{assumption}{Assumption}
\newtheorem{condition}{Condition}
\begin{document}
\title{Distributed Interval Optimization with Stochastic Zeroth-order Oracle}

\author{Yinghui~Wang, Xianlin~Zeng, Wenxiao~Zhao, and~Yiguang~Hong
}

\maketitle

\begin{abstract}
In this paper, we investigate a distributed interval optimization problem which is modeled with optimizing a sum of convex interval-valued  objective functions subject to global convex constraints, corresponding to
agents over a time-varying network. We first  reformulate the distributed interval optimization problem as a distributed constrained optimization problem by scalarization. Then, we design a stochastic zeroth-order algorithm to solve the reformulated distributed problem, optimal solutions of which are also proved to be Pareto optimal solutions of the distributed interval optimization problem. Moreover, we construct the explicit convergence and the convergence rate in expectation of the given algorithm. Finally, a numerical example is given to illustrate the effectiveness of the proposed algorithm.
\end{abstract}

\begin{IEEEkeywords}
distributed interval optimization, Pareto optimal solution, zeroth-order algorithm, convergence rate, time-varying network.
\end{IEEEkeywords}
\IEEEpeerreviewmaketitle

\section{Introduction}
\IEEEPARstart{R}{ecently}, distributed optimization and control in a network environment, where  agents only have the local information and exchange information with their neighbours, have attracted much attention, which maybe more effective in many large-scale problems than centralized designs.  In fact, distributed first-order algorithms (which require subgradient information of local objective functions) and second-order algorithms (which require Hessian matrices information of local objective functions) for various (constrained) optimization problems have been widely studied for sensor networks, smart grids, and computation, etc\cite{nedic2009distributed, yi2015distributed, nedic2010constrained, ram2010distributed, zhang2015distributed, cherukuri2015distributed, yuan2016regularized, yuan2017adaptive}. Also, when the computation of first-order and second-order information of local objective functions is expensive, (distributed) zeroth-order or subgradient-free algorithms are designed (referring to \cite{chen1999kiefer, conn2009introduction, duchi2013optimal, nesterov2011random, yuan2015randomized} and references therein). Note that the connectivity is a key issue in the distributed design.  Although fixed topologies are still required for distributed optimization designs in some situations,  time-varying jointly connected networks have been considered in many algorithms such as \cite{nedic2009distributed, nedic2010constrained, wang2018distributed, wang2017distributed, li2017distributed}.



In practice, local objective functions and constraints may not be accurately or explicitly described. For example, various uncertainties appear in power systems and related for operational security \cite{wu2012comparison}. Interval optimization is an approach for dealing with these uncertainties.  To solve optimization problems with uncertainties, the interval optimization problem (IOP), first proposed by \cite{neumaier1990interval} and further studied in \cite{rohn1994positive, levin1999nonlinear}, has been widely studied in many different areas such as economics \cite{hu2006novela} and power systems \cite{wu2012comparison}.  In the interval optimization problem setup, objective functions are interval-valued, which are described by intervals rather than real numbers. The well-defined partial orderings and convexity of interval-valued maps \cite{hisao1990multiobjective, hu2006novela, wu2008interval}  provide existence guarantees of solutions of  maximization and minimization of interval optimization problems. Up to now, the literature (referring to \cite{liu2007numerical, jiang2008nonlinear, jayswal2011sufficiency, hladik2012interval}) has provided various programming methods, including Wolfe's method and Lamke's algorithm, to deal with centralized interval optimization problems.


With this background, it is nature for us to consider how to effectively construct distributed algorithms for interval optimization problems over (time-varying) multi-agent networks.  However, the partial order resulting from  intervals makes the method based on gradients of objective functions become hard, especially when we only have local information in a distributed design, and in some cases, the subgradient of interval-valued objective functions may not be available.  In fact, very few works were even done for centralized interval optimization without subgradients in the algorithm design. Up to now, although there are some works on distributed optimization problems without subgradient information of local objective functions, there is no zeroth-order design on distributed interval optimization without using subgradients of objective functions.

The motivation of this paper is to propose a distributed stochastic zeroth-order algorithm for interval optimization problems.  Due to difficulties in distributed interval optimization (including that the gradient/subgradient information of interval-valued  functions is, sometimes, computationally costly and even impracticable for some cases \cite{bhurjee2012efficient}), we actively employ a stochastic idea to solve distributed interval-valued problems.  In fact, stochastic methods provide a way for subgradient-free designs to overcome the difficulty  of obtaining subgradient information of local interval-valued functions. Also, stochastic ideas are employed to guarantee the almost sure convergence and stability of algorithms.  Here we propose a distributed  stochastic zeroth-order algorithm  for a class of interval optimization problems. The contributions of this paper are summarized as follows:
\begin{itemize}
\item[(a)] Following the rapid development of data science and engineering systems, we extend the centralized interval optimization problem \cite{rohn1994positive, levin1999nonlinear} to a distributed one. In fact, we reformulate the distributed interval optimization problem as a distributed constrained non-smooth optimization problem.  In this reformulation, optimal solutions of the reformulated problem are equivalent to Pareto optimal solutions of the distributed interval optimization problem.   Distributed randomization methods can be conveniently implemented for the reformulation, while  the well-known versions such as Wolfe's and Lamke's methods cannot be easily extended to distributed versions due to the difficulty of step-size selections \cite{bellet2015distributed}.

 \item[(b)] We design a new distributed stochastic zeroth-order algorithm  for the reformulated  non-smooth problem, since the subgradient of the interval optimization problem is hard to be obtained. The algorithm adopts random differences to approximate subgradients of local reformulated objective functions, which is also different from many existing distributed stochastic zeroth-order or subgradient-free algorithms (c.f., \cite{anit2018distributed, hajinezhad2017zeroth, yuan2015randomized, yuan2015zeroth}) though it is consistent with those algorithms when the local objective function is smooth.

\item[(c)]With the proposed algorithm, we prove the achievement of the global minimization with probability one, and further provide its convergence rate  in expectation.  Moreover, the convergence results of the proposed algorithm match the best rate of  distributed  zeroth-order algorithms \cite{anit2018distributed, hajinezhad2017zeroth, yuan2015randomized, yuan2015zeroth} with diminishing step-sizes.

 \end{itemize}

The rest of the paper is organized as follows. Preliminaries related to the analysis of the distributed interval optimization problem are given in Section \ref{sec2}. Then the distributed interval optimization problem is formulated and the corresponding distributed algorithm is introduced in Section \ref{sec3}, while the proposed
algorithm is analyzed in Section \ref{sec4}. Following that, a numerical example is
given in Section \ref{sec5}. Finally, some concluding remarks are addressed in Section \ref{sec6}.

\section{Mathematical preliminaries}\label{sec2}

In this section, we introduce mathematical preliminaries about convex analysis \cite{hiriart2012fundamentals, clarke1998nonsmooth, nedic2010constrained}, probability theory \cite{durrett2010probability, polyak1987introduction} and interval optimization, respectively.

\subsection{Non-smooth analysis}\label{sec2.1}

Let $\mathcal{R}^{p}$ be the $p$-dimensional Euclidean space. Denote $ \mathcal{R}^{p}_{+}$ as its non-negative orthants. $\|\cdot\|$ denotes the Euclidean norm. Denote the set of all non-empty compact intervals of $\mathcal{R}$ by $\mathcal{C}(\mathcal{R})$.

\begin{definition}\label{Def1}
\cite{hiriart2012fundamentals}Let $f(x):\mathcal{R}^{p}\rightarrow \mathcal{R}$ be a non-smooth convex function.
Vector-valued function $\triangledown f (x)\in\partial f (x)\subset \mathcal{R}^{p}$ is called the subgradient of  $f(x)$ if for any  $x,y\in dom(f)$, the following inequality holds:
\begin{align*}
f(y)-f(x)-\big \langle\triangledown f(x), y-x\big\rangle\geqslant 0.
\end{align*}
\end{definition}

The next result is useful in the analysis of non-smooth functions.

\begin{lemma}\label{Lem1}\cite{clarke1998nonsmooth}(Lebourg's Mean Value Theorem) Let $x,y\in X$. Suppose $f(x):\mathcal{R}^{m}\rightarrow \mathcal{R}$ is Lipschitz on an open set containing line segment $[x,y]$. Then there exists a point $u\in (x,y )$ such that
\begin{align*}
f(x)-f(y)\in \langle \partial f(u), x-y \rangle.
\end{align*}
\end{lemma}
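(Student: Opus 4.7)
My plan is to reduce the vector Lipschitz function to a scalar one by parametrizing the segment, and then apply a nonsmooth Rolle-type argument combined with the Clarke chain rule. First I would introduce the auxiliary function $g:[0,1]\to\mathcal{R}$ defined by $g(t)=f(x+t(y-x))+t\bigl(f(x)-f(y)\bigr)$. Since $f$ is Lipschitz on an open set containing the segment $[x,y]$, the composition $t\mapsto f(x+t(y-x))$ is Lipschitz on $[0,1]$, and adding the linear term $t\bigl(f(x)-f(y)\bigr)$ preserves Lipschitz continuity. A direct computation gives $g(0)=f(x)$ and $g(1)=f(y)+f(x)-f(y)=f(x)$, so $g$ takes equal values at the endpoints.

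Next, since $g$ is continuous on the compact interval $[0,1]$, it attains both its maximum and its minimum. Because $g(0)=g(1)$, at least one of these extrema is attained at an interior point $t_{0}\in(0,1)$. By the nonsmooth Fermat rule for locally Lipschitz functions (Clarke, Proposition 2.3.2), one has $0\in\partial g(t_{0})$, where $\partial$ denotes the Clarke generalized subdifferential.

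The third step is to compute $\partial g(t_{0})$ via the Clarke sum and chain rules. The sum rule gives $\partial g(t_{0})\subseteq \partial(f\circ\gamma)(t_{0})+\{f(x)-f(y)\}$, where $\gamma(t)=x+t(y-x)$. Since $\gamma$ is affine with Jacobian $y-x$, the chain rule for Clarke subdifferentials yields $\partial(f\circ\gamma)(t_{0})\subseteq\langle\partial f(\gamma(t_{0})),\,y-x\rangle$. Setting $u:=\gamma(t_{0})=x+t_{0}(y-x)\in(x,y)$ and combining these inclusions with $0\in\partial g(t_{0})$ produce an element $\xi\in\partial f(u)$ with $0=\langle\xi,\,y-x\rangle+f(x)-f(y)$, that is, $f(x)-f(y)=\langle\xi,\,x-y\rangle\in\langle\partial f(u),\,x-y\rangle$, which is the stated conclusion.

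The main technical obstacle is invoking the correct form of the Clarke chain rule; fortunately, because the inner map $\gamma$ is smooth (in fact affine) and $f$ is locally Lipschitz on an open neighbourhood of $[x,y]$, the chain rule holds with equality rather than strict inclusion, so existence of the required subgradient is immediate. A minor subtlety is the degenerate case in which $g$ is identically $f(x)$ on $[0,1]$; then $0\in\partial g(t)$ for every interior $t$ and the same argument applies verbatim, so the degenerate case is absorbed into the generic one.
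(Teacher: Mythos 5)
Your proof is correct: it is the standard proof of Lebourg's mean value theorem via the auxiliary function $g(t)=f(x+t(y-x))+t\bigl(f(x)-f(y)\bigr)$, an interior extremum, the nonsmooth Fermat rule, and the Clarke sum and chain rules. The paper itself offers no proof of this lemma (it is simply quoted from the cited reference of Clarke et al.), so there is nothing to compare against beyond noting that your argument coincides with the textbook proof; the only slight overstatement is your claim that the chain rule holds with equality for the affine inner map $\gamma$ --- in general only the inclusion $\partial(f\circ\gamma)(t_{0})\subseteq\langle\partial f(\gamma(t_{0})),y-x\rangle$ is guaranteed, but that inclusion is all your argument actually uses, so the conclusion stands.
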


Then we summarize some  inequalities on Euclidean norm \cite{clarke1998nonsmooth, nedic2010constrained} to be used in this paper.
\begin{lemma}\cite{ram2010distributed}\label{Lem2}
Let $x_{1}, x_{2}, \ldots, x_{n}$ be vectors in $\mathcal{R}^{p}$. Then
\begin{align*}
\sum_{i=1}^{n}\Big\|x_{i}-\frac{1}{n}\sum_{i=1}^{n}x_{j}\Big\|^{2}\leqslant \sum_{i=1}^{n}\Big\|x_{i}-x\Big\|^{2}, \quad \forall x\in \mathcal{R}^{p}.
\end{align*}
\end{lemma}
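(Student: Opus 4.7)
The plan is to exploit the fact that $\bar{x} := \frac{1}{n}\sum_{j=1}^{n} x_{j}$ is the minimizer, over $x \in \mathcal{R}^{p}$, of the strongly convex quadratic $x \mapsto \sum_{i=1}^{n}\|x_{i}-x\|^{2}$. Rather than invoking any general optimization machinery, however, I would just do the direct ``add and subtract $\bar{x}$'' expansion, which makes the inequality visible as an orthogonality/Pythagorean identity.

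Concretely, first I would set $\bar{x}:=\frac{1}{n}\sum_{j=1}^{n}x_{j}$ and write, for arbitrary $x\in\mathcal{R}^{p}$,
\begin{align*}
\sum_{i=1}^{n}\|x_{i}-x\|^{2} = \sum_{i=1}^{n}\|(x_{i}-\bar{x})+(\bar{x}-x)\|^{2}.
\end{align*}
Expanding the square using $\|a+b\|^{2}=\|a\|^{2}+2\langle a,b\rangle+\|b\|^{2}$ yields three terms: $\sum_{i=1}^{n}\|x_{i}-\bar{x}\|^{2}$, a cross term $2\big\langle \bar{x}-x,\sum_{i=1}^{n}(x_{i}-\bar{x})\big\rangle$, and $n\|\bar{x}-x\|^{2}$.

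The second step, which is really the whole content of the lemma, is to observe that the cross term vanishes: by the very definition of $\bar{x}$ we have $\sum_{i=1}^{n}(x_{i}-\bar{x})=\sum_{i=1}^{n}x_{i}-n\bar{x}=0$. Substituting back gives the Pythagorean-type identity
\begin{align*}
\sum_{i=1}^{n}\|x_{i}-x\|^{2} = \sum_{i=1}^{n}\|x_{i}-\bar{x}\|^{2} + n\|\bar{x}-x\|^{2},
\end{align*}
and since the extra term $n\|\bar{x}-x\|^{2}$ is non-negative, the claimed inequality follows for every $x\in\mathcal{R}^{p}$.

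There is no real obstacle here — the only subtlety is recognizing that the lemma is essentially the statement that the centroid is the least-squares minimizer, so the proof is one line of algebra once the right decomposition is written down. I would also note in passing that equality holds iff $x=\bar{x}$, which sharpens the bound but is not needed in the sequel.
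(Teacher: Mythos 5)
Your proof is correct: the add-and-subtract decomposition around $\bar{x}$, the vanishing cross term, and the resulting Pythagorean identity $\sum_{i}\|x_{i}-x\|^{2}=\sum_{i}\|x_{i}-\bar{x}\|^{2}+n\|\bar{x}-x\|^{2}$ give exactly the claimed inequality. The paper itself offers no proof of this lemma (it is simply cited from the reference), and your argument is the standard one-line derivation that underlies the cited result, so there is nothing to reconcile.
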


Denote the projection of $x$ onto set $X$ by  $P_{X}(x)$, i.e., $P_{X}(x)=\arg\min_{y\in X} \big\|x-y \big\|$, where $X$ is a closed bounded convex set in $\mathcal{R}^{p}$. The following lemma introduces some results on projection operators:
\begin{lemma}\label{Lem3}
\cite{hiriart2012fundamentals, nedic2010constrained}Let $X$ be  a closed convex set in $\mathcal{R}^{p}$. Then for any $x\in \mathcal{R}^{p}$, it holds that
\begin{itemize}
\item[(a)]$\big\langle x-P_{X}(x), y-P_{X}(x)\big\rangle \leqslant 0$,  for all $y\in X$
\item[(b)]$\big\|P_{X}(x)-P_{X}(y) \| \leqslant \big\|x-y \big\|$, for all $x,y\in \mathcal{R}^{m}$.
\item[(c)]$\big\langle x-y, P_{X}(y)- P_{X}(x)\big\rangle \leqslant -\big\|P_{X}(x)-P_{X}(y) \big\|^{2}$, for all $y\in \mathcal{R}^{m}$.
\item[(d)] $\big\|x-P_{X}(x) \big\|^{2} +\big \|y-P_{X}(x)\big\|^{2} \leqslant \big\| x-y\big\|^{2}$, for any $y\in X$.
\end{itemize}
\end{lemma}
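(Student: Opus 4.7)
The plan is to take the variational characterization (a) as the foundation and deduce (b)--(d) by short algebraic manipulations; all four statements are classical, but here is how I would organize the derivation from scratch.

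For (a), I would observe that, by definition, $P_X(x)$ is the unique minimizer of the strictly convex quadratic $z\mapsto \tfrac{1}{2}\|x-z\|^2$ over the closed convex set $X$. For any fixed $y\in X$, convexity of $X$ ensures that $z_t := P_X(x) + t(y-P_X(x))$ lies in $X$ for all $t\in[0,1]$, so the scalar function $\phi(t):=\tfrac{1}{2}\|x-z_t\|^2$ is minimized over $[0,1]$ at $t=0$. Computing the right derivative $\phi'(0^+) = -\langle x-P_X(x),\, y-P_X(x)\rangle \geq 0$ immediately gives (a).

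For (b) and (c), I would apply (a) twice with cleverly chosen test points. Taking the test point $P_X(y)\in X$ in the variational inequality for $x$ yields $\langle x-P_X(x),\, P_X(y)-P_X(x)\rangle \leq 0$, and swapping the roles of $x$ and $y$ yields $\langle y-P_X(y),\, P_X(x)-P_X(y)\rangle \leq 0$. Adding the two inequalities and rearranging produces $\|P_X(x)-P_X(y)\|^2 \leq \langle x-y,\, P_X(x)-P_X(y)\rangle$, which is exactly (c) after moving the sign into the inner product. Combining this with the Cauchy--Schwarz inequality and cancelling one factor of $\|P_X(x)-P_X(y)\|$ yields the non-expansive estimate in (b). For (d), I would expand $\|x-y\|^2 = \|(x-P_X(x)) - (y-P_X(x))\|^2$ as $\|x-P_X(x)\|^2 + \|y-P_X(x)\|^2 - 2\langle x-P_X(x),\, y-P_X(x)\rangle$; since $y\in X$, part (a) forces the inner product to be non-positive, so $-2\langle x-P_X(x),\, y-P_X(x)\rangle \geq 0$, and the inequality in (d) follows.

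There is no real obstacle here: essentially all the content is concentrated in the first-order optimality condition (a), after which (b)--(d) reduce to selecting the right test point and expanding a squared norm. The only step demanding a moment of care is justifying the right derivative $\phi'(0^+)$ in the proof of (a) (which is standard once one notes $\phi$ is a smooth quadratic in $t$) and, in (d), keeping track of the sign so that the cross term inherited from the expansion aligns with the direction supplied by (a).
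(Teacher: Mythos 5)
Your derivation is correct: part (a) is the standard first-order optimality condition for the projection, and your deductions of (b), (c), and (d) from it (adding the two variational inequalities for $x$ and $y$ to get (c), then Cauchy--Schwarz for (b), and expanding $\|x-y\|^2$ about $P_X(x)$ for (d)) are all sound. Note that the paper itself offers no proof of this lemma --- it is stated as a cited classical result from \cite{hiriart2012fundamentals, nedic2010constrained} --- so there is nothing to compare against; your argument is the standard one found in those references and would serve as a valid self-contained proof.
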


\subsection{Probability theory}\label{sec2.2}
Denote $(\Omega,\mathcal{F}, \mathbb{P})$ as the probability space, where $\Omega$ is the  whole event space, $\mathcal{F}$ is  the $\sigma$-algebra on $\Omega$, and $\mathbb{P}$ is  the probability measure on $(\Omega,\mathcal{F})$.

\begin{definition}\label{Def2}
 \cite{durrett2010probability}
 \begin{itemize}
 \item[(a)] $x_{1},x_{2},\ldots,x_{k}\ldots$ is a sequence of random variables (r. v.) in $(\Omega,\mathcal{F}, \mathbb{P})$. If $P(x_{k}\rightarrow x)=1$, then $x_{k}$ converges $x$ almost surely (a. s.).
 \item[(b)]$x_{1},x_{2},\ldots,x_{k}\ldots$ is a sequence of random variables (r. v.) in $(\Omega,\mathcal{F}, \mathbb{P})$. If  $\mathbb{E}\|x_{k}-x\|^{p}\rightarrow 0$, then $x_{k}$ converges to $x$ in $L^{p}$.
 \end{itemize}
\end{definition}

In $(\Omega,\mathcal{F}, \mathbb{P})$, denote $\{ F(k)\}_{k\geq1}$ as a sequence of increasing sub-$\sigma$-algebras on $\mathcal{F}$. $\{h(k)\}_{k\geq1}$, $\{v(k)\}_{k\geq1}$ and $\{w(k)\}_{k\geq1}$ are variable sequences in $\mathcal{R}$ such that for each $k$, $h(k)$, $v(k)$ and $w(k)$ are $F(k)$-measurable.  The following lemma is for the convergence of super-martingales:

\begin{lemma}\label{Lem4}\cite{polyak1987introduction} Suppose that $\{v(k)\}_{k\geq1}$ and $\{w(k)\}_{k\geq1}$ are nonnegative and  $\sum_{k=1}^{\infty} w(k) <\infty$, and $\{h(k)\}_{k\geq1}$ is bounded from below uniformly. If
\begin{align*}
\mathbb{E}[h(k+1)| F(k)] \leqslant (1+\eta(k)) h(k)-v(k)+ w(k),\;\; \forall k\geqslant 1
\end{align*}
holds almost surely, where $\eta(k) \geqslant 0$ are constants with $\sum_{k=1}^{\infty}\eta(k) <\infty$, then $\{h(k)\}_{k\geq1}$ converges almost surely with $\sum_{k=1}^{\infty} v(k) < \infty$.
\end{lemma}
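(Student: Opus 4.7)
The plan is to recognize this as the classical Robbins--Siegmund almost sure supermartingale convergence result and to prove it by reducing to a genuine non-negative supermartingale, after which Doob's supermartingale convergence theorem finishes the argument.

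My first step would be to remove the lower bound on $h(k)$. Since $h(k) \geq -M$ uniformly for some constant $M \geq 0$, I would set $\tilde h(k) = h(k) + M \geq 0$. Substituting into the recursion gives
\begin{align*}
\mathbb{E}[\tilde h(k+1)\mid F(k)] \leqslant (1+\eta(k))\tilde h(k) - v(k) + \bigl(w(k) + M\eta(k)\bigr),
\end{align*}
and the new ``$w$-term'' $\tilde w(k) := w(k) + M\eta(k)$ is still summable because both $\sum w(k)<\infty$ and $\sum \eta(k)<\infty$. So I can assume henceforth that $h(k)\geq 0$.

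Next I would introduce the deterministic normalizing sequence $A(k) := \prod_{j=1}^{k-1}(1+\eta(j))$, which converges to a finite positive limit since $\sum \eta(j)<\infty$, and in particular $A(k)$ is bounded above and below by positive constants. I would then define the auxiliary process
\begin{align*}
U(k) := \frac{h(k)}{A(k)} + \sum_{j=k}^{\infty}\frac{\tilde w(j)}{A(j+1)},
\end{align*}
which is non-negative and $F(k)$-measurable (the infinite sum is finite since $\sum \tilde w < \infty$ and $1/A(j+1)$ is bounded). A direct computation, dividing the hypothesis by $A(k+1) = (1+\eta(k))A(k)$ and collapsing the tail sum, yields
\begin{align*}
\mathbb{E}[U(k+1)\mid F(k)] \leqslant U(k) - \frac{v(k)}{A(k+1)}.
\end{align*}
Hence $\{U(k)\}$ is a non-negative supermartingale.

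By Doob's supermartingale convergence theorem, $U(k)$ converges almost surely to a finite limit, and taking expectations of a telescoping sum of the above inequality gives $\sum_{k}v(k)/A(k+1) < \infty$ almost surely; since $A(k+1)$ is bounded, this yields $\sum_{k}v(k)<\infty$ a.s. Finally, the tail sum $\sum_{j\geq k}\tilde w(j)/A(j+1)$ tends to $0$, so $h(k)/A(k)$ converges a.s., and since $A(k)$ converges to a positive constant, $h(k)$ converges a.s., completing the reduction to the original $h$ by subtracting $M$.

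The main obstacle is the bookkeeping in choosing the correct normalizer $A(k)$ and the correct tail correction term to get a genuine supermartingale inequality with no residual positive drift, especially keeping track of whether the perturbation $M\eta(k)$ introduced by the lower-bound reduction remains summable; once that algebra is right, the remainder of the argument is a standard invocation of Doob's theorem.
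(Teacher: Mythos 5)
The paper does not actually prove this lemma; it is quoted from Polyak's book with a citation only, so there is no in-paper argument to compare against. Your proposal is the standard Robbins--Siegmund reduction and its skeleton is sound: shifting $h$ by its lower bound costs only a summable perturbation (in fact the extra term enters with the favorable sign $-M\eta(k)\leqslant 0$, so strictly no perturbation is needed), the normalizer $A(k)=\prod_{j<k}(1+\eta(j))$ is sandwiched between $1$ and $\exp\big(\sum_{j}\eta(j)\big)<\infty$, and once a genuine nonnegative supermartingale is exhibited, Doob's convergence theorem delivers both conclusions.

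The one step you should tighten is the adaptedness of $U(k)$. The correction term $\sum_{j\geqslant k}\tilde w(j)/A(j+1)$ looks into the future: it is $F(k)$-measurable only if the $w(j)$ are deterministic. As the lemma is set up here, $w(k)$ is an $F(k)$-measurable random sequence with $\sum_{k}w(k)<\infty$, so your $U(k)$ need not be adapted and the collapse $\mathbb{E}[U(k+1)\mid F(k)]=\mathbb{E}[\tilde h(k+1)/A(k+1)\mid F(k)]+\sum_{j\geqslant k+1}\tilde w(j)/A(j+1)$ is not justified. Two standard repairs: (i) if $\mathbb{E}\sum_{k}w(k)<\infty$, replace the tail sum by its conditional expectation given $F(k)$; the tower property then gives the same supermartingale inequality. (ii) In general, work with the backward-looking process $Y(k)=\tilde h(k)/A(k)-\sum_{j<k}\tilde w(j)/A(j+1)+\sum_{j<k}v(j)/A(j+1)$, which is adapted and a supermartingale but only bounded below by the random quantity $-\sum_{j}\tilde w(j)/A(j+1)$; one then localizes with the stopping times $\tau_a=\inf\{k:\sum_{j<k}\tilde w(j)/A(j+1)>a\}$ so that the stopped process is bounded below by $-a$, applies Doob, and lets $a\to\infty$. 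In the paper's actual applications the $w(k)$ are deterministic (they are built from full expectations such as $\iota^2(k)\mathbb{E}\|d_i(k)\|^2$), so your version suffices there; note also that your final step $\sum_{k}\mathbb{E}[v(k)]/A(k+1)\leqslant\mathbb{E}[U(1)]$ implicitly assumes $h(1)$ is integrable, an assumption the stopping-time route avoids.
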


\subsection{Interval Optimization}\label{sec2.3}

Denote $A=[a_{L}, a_{R}]$ and $B=[b_{L}, b_{R}]$ as two non-empty compact intervals in $\mathcal{P}(\mathcal{R})$.  Then we introduce quasi-orderings on $\mathcal{C}(\mathcal{R})$ and  some properties of interval-valued maps.

\begin{definition}\label{Def3}\cite{hisao1990multiobjective, hu2006novela}
For any $A, B\in \mathcal{P}(\mathcal{R})$, denote
\begin{itemize}
\item[(a)]$A\leqq_{L} B$ if $a_{L}\leqslant b_{L}$;
\item[(b)]$A\leqq_{U} B$ if $a_{R}\leqslant b_{R}$;
\item[(c)]$A\leqq B$ if $A\leqq_{L} B$ and $A\leqq_{U} B$.
\end{itemize}
\end{definition}

\begin{definition}\label{Def4}\cite{hisao1990multiobjective, hu2006novela}
For any $A, B\in\mathcal{P}(\mathcal{R})$, denote
\begin{itemize}
\item[(a)]$A<_{L} B$ if $a_{L}< b_{L}$;
\item[(b)]$A<_{U} B$ if $a_{R}< b_{R}$;
\item[(c)]$A< B$ if $A<_{L} B$ and $A<_{U} B$;
\item[(d)]$A\leq B$ if  $A<_{L} B$ and $A\leqq_{U} B$, or  $A\leqq _{L} B$ and $A<_{U} B$.
\end{itemize}
\end{definition}
Let $G: \mathcal{R}^{p}\rightrightarrows\mathcal{R}$ be an interval-valued map with respect to $x$.  Then we introduce Lipschitz continuity and convexity of the map $G$.

\begin{definition}\label{Def6}\cite{aubin2012differential}
Let $G: \mathcal{R}^{p}\rightrightarrows\mathcal{R}$ be an interval-valued map. $G$ is locally Lipschitz at $x$ if there exist
$K > 0$ and a neighborhood $W$ of $x$ such that
\begin{align*}
G(x_{1})\subseteq G(x_{2})+K\|x_{1}-x_{2}\|,\quad \forall x_{1},x_{2}\in W.
\end{align*}
\end{definition}
In fact, $G$ is locally Lipschitz at $x$ if there exist a neighbourhood  $W$ of $x$ and a constant $K\geqslant 0$, such that
\begin{align*}
G(x_{1})\subseteq B\big(G(x_{2}),K\|x_{1}-x_{2}\|\big).
\end{align*}
Denote
\begin{align*}
B(A,\varrho)=\{y| d(y,A)\leqslant \varrho\},
\end{align*}
as the ball of radius $\varrho$ around subset $A$, where $y$ is chosen from a metric space.

\begin{definition}\label{Def7}\cite{wu2008interval} Let $G: \mathcal{R}^{p}\rightrightarrows\mathcal{R}^{q}$ be an interval-valued map. $G$ is convex (lower-convex or upper convex) on $\Omega$ if, $\forall x_{1},x_{2}\in \Omega$, $\forall \alpha\in [0,1]$,
\begin{align*}
G\big(\alpha x_{1}+(1-\alpha)x_{2}\big)\leqq (\leqq_{L} or \leqq{U})\alpha G(x_{1})+(1-\alpha)G(x_{2}).
\end{align*}
\end{definition}

\begin{remark}\label{Rem1} Suppose that $G$ is compact-valued and convex with $G(\cdot)=[L(\cdot),R(\cdot)]$. Then, by Definitions \ref{Def3} and \ref{Def4}, $L(\cdot)$, $R(\cdot):\mathcal{R}^{p}\rightarrow \mathcal{R}$ are convex functions with respect to $x\in \mathcal{R}^{p}$. Namely, for any $x_{1},\;x_{2}\in  \mathcal{R}^{p}$ and $t\in [0,1]$,  following inequalities hold:
\begin{align*}
L\big(tx_{1}+(1-t)x_{2}\big)&\leqslant tL(x_{1})+(1-t)L(x_{2}),\\
R\big(tx_{1}+(1-t)x_{2}\big)&\leqslant tR(x_{1})+(1-t)R(x_{2}).
\end{align*}
\end{remark}

Then let us consider interval optimization problems.
Let $G: \mathcal{R}^{p}\rightrightarrows\mathcal{R}$ be an interval-valued map. Now the interval optimization problem is given as follows:
\begin{align*}
(IOP) \quad\quad \min_{x}\quad G(x) \quad s.\; t.\; \quad x\in \Omega
\end{align*}
where $G(x)=[L(x),R(x)]$ is a non-empty compact interval in $\mathcal{R}$.  For illustration, we introduce an example of an interval valued function (\cite{bhurjee2012efficient}).

\begin{example}
Consider a function $G: \mathcal{R}\rightrightarrows\mathcal{R}$. Without loss of generality, consider $c$ as an order set, which is  influenced by orders maintained on the presence of components of $G(x)$.  If $G(x_{1},x_{2})=\big\{c_{1}x_{1}^{2}+c_{2}x_{1}e^{c_{3}x_{2}}:c_i\in C_i,\,i=1,2,3\big\}$, where $C_{i}=[c^{i}_{L},c^{i}_{R}],\,i=1,2,3,$ are intervals. Suppose $c=[c_{1},c_{2},c_{3}]^{\top}$, $t=[t_1, t_2, t_3]^{\top}$,  and $C(t)=[c_{1}(t_{1}),c_{2}(t_{2}),c_{3}(t_{3})]^{\top}$, where  $c_{i}(t_{i})=(1-t_{i})c^{i}_{L}+t_{i}c^{i}_{R}$ and $t_{i}\in [0,1]$ for $i=1,2,3$. For the given interval vector $C^{3}_{v}=\prod_{i=1}^3 C_{i}$, $G(x_{1},x_{2})=[L(x),R(x)]$ is an interval, where $L(x)=\min_{t\in [0,1]^3}  G_{c(t)}(x_{1},x_{2})$, $R(x)=\max_{t\in [0,1]^3}  G_{c(t)}(x_{1},x_{2})$, and $G_{c(t)}(x_{1},x_{2})=c_{1}(t_{1})x_{1}^{2}+c_{2}(t_{2})x_{1}e^{c_{3}(t_{3})x_{2}}$.
\end{example}

Recalling definitions of $L(x)$ and $R(x)$ of the example, we see that we cannot  get explicit expressions of $L(x)$ and $R(x)$, and this  IOP can be solved through set-valued optimization rather than vector valued optimization.

Based on quasi-orderings
of compact intervals in $\mathcal{C}(\mathcal{R})$ given in Definitions \ref{Def3} and \ref{Def4}, we define a Pareto optimal solution to IOP.

\begin{definition}\label{Def8}\cite{maeda2012optimization}
\begin{itemize}
 \item[(a)]A point  $x^{*} \in \Omega$ is said to be a solution to IOP  if $ G(x^{*}) \leqq G(x)$ for all $x\in \Omega$.
 \item[(b)]A point  $x^{*} \in \Omega$ is said to be a Pareto optimal solution to IOP if $G(\bar{x})\leqq G(x^{*})$ for some $\bar{x}\in \Omega$ implies $G(x^{*})\leqq G(\bar{x})$.
\end{itemize}
\end{definition}

Clearly, there is no solution to the interval optimization problem given in Fig. \ref{fig1}. However, $[x_{1},x_{2}]$ are Pareto optimal solutions to this given problem.
\begin{itemize}
\item[(a)]For $y\leqslant x_{1}$, we have $R(y)\geqslant R(x_{1})$ and $L(y)\geqslant L(x_{1})$, which means that $G(y)\geqq G(x_{1})$.
\item[(b)]For $y\geqslant x_{2}$, we have $R(y)\geqslant R(x_{2})$ and $L(y)\geqslant L(x_{2})$, which means that $G(y)\geqq G(x_{2})$.
\item[(c)]For $x_{1}\leqslant y\leqslant x_{2}$, we have $R(y)\leqslant R(x_{1})$, $L(y)\geqslant L(x_{1})$, $R(y)\geqslant R(x_{2})$ and $L(y)\leqslant L(x_{2})$ according to Definition \ref{Def8}. Therefore, $[x_{1},x_{2}]$ are Pareto optimal solutions to this given problem.
\end{itemize}
\begin{figure}[!htb]
  \centering
  \includegraphics[width=\hsize]{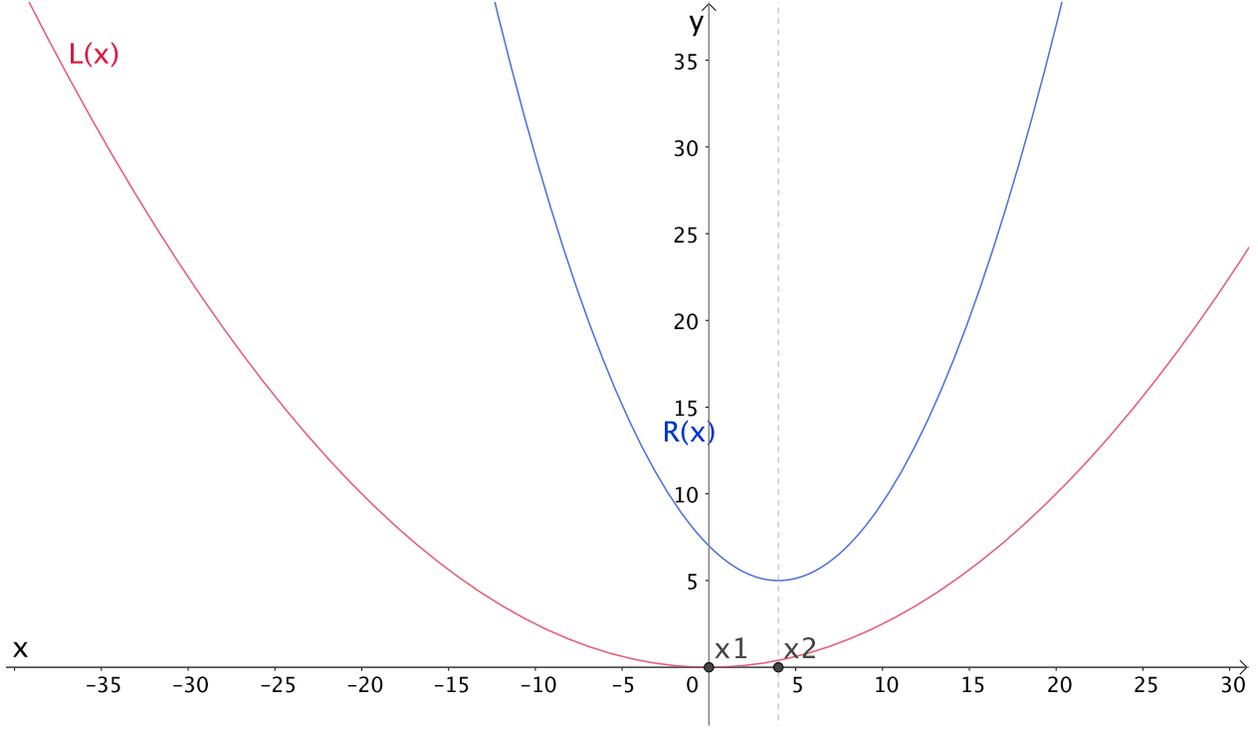}
  \caption{$L(x)$ and $R(x)$ for vector $x$.}
  \label{fig1}
\end{figure}

Associated with IOP, we  consider the following  scalarization of interval optimization problem:
\begin{align*}
SIOP: \quad \min_{x} \quad \;&\lambda L(x)+(1-\lambda)R(x)\notag\\
s.\; t.\; \quad &x\in \Omega
\end{align*}
where  $\lambda\in [0,1]$.

The following lemma is given in \cite{maeda2012optimization}. We give its proof here just for self-containment.

\begin{lemma}\label{Lem5} Suppose that $G$ is compact-valued and convex with respect to $x$:
\begin{itemize}
\item[(a)]If there exists a real number $\lambda\in(0,1)$ such that $x^{*}\in \Omega$ is an optimal solution to SIOP, then $x^{*}\in \Omega$ is a Pareto optimal solution to IOP.
\item[(b)]If $x^{*}\in \Omega$ is a Pareto optimal solution to IOP, then there exists a real number $\lambda\in [0,1]$ such that $x^{*}\in \Omega$ is an optimal solution to SIOP.
\end{itemize}
\end{lemma}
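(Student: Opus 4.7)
The plan is to prove the two implications by quite different means: part (a) follows from a direct algebraic argument using positivity of both scalarization weights, whereas part (b) requires a convex separation argument in the $2$-dimensional image space $(L,R)$.

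For part (a), I would argue by contradiction on the Pareto definition. Suppose $\lambda\in(0,1)$ is given and $x^{\ast}$ minimizes $\lambda L(x)+(1-\lambda)R(x)$ on $\Omega$. Take any $\bar x\in\Omega$ with $G(\bar x)\leqq G(x^{\ast})$, i.e., $L(\bar x)\leqslant L(x^{\ast})$ and $R(\bar x)\leqslant R(x^{\ast})$. Combining these with weights $\lambda>0$ and $1-\lambda>0$ yields $\lambda L(\bar x)+(1-\lambda)R(\bar x)\leqslant \lambda L(x^{\ast})+(1-\lambda)R(x^{\ast})$, while SIOP-optimality of $x^{\ast}$ forces equality. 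Because both weights are strictly positive, this equality can hold only if $L(\bar x)=L(x^{\ast})$ and $R(\bar x)=R(x^{\ast})$ individually; hence $G(x^{\ast})\leqq G(\bar x)$, which is exactly Pareto optimality. This is why the strictness $\lambda\in(0,1)$ is indispensable here.

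For part (b), I would work in the image plane. Define the convex set
\begin{align*}
T=\bigl\{(L(x)-L(x^{\ast}),\,R(x)-R(x^{\ast}))+\mathcal{R}_{+}^{2}\;:\;x\in\Omega\bigr\},
\end{align*}
whose convexity follows from convexity of $L$, $R$ (Remark~\ref{Rem1}) and of $\Omega$. Pareto optimality of $x^{\ast}$ translates to the fact that $T$ has empty intersection with the open negative orthant $C=\{(a,b):a<0,\;b<0\}$: any common point would furnish some $\bar x$ with both coordinates of $G(\bar x)$ strictly less than those of $G(x^{\ast})$, contradicting the Pareto definition. By the standard separation theorem for disjoint convex sets in $\mathcal{R}^{2}$, there exist $(\mu,\nu)\neq(0,0)$ such that $\mu a+\nu b\geqslant \mu a^{\prime}+\nu b^{\prime}$ for every $(a,b)\in T$ and $(a^{\prime},b^{\prime})\in C$. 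Letting $(a^{\prime},b^{\prime})\to-\infty$ in each coordinate forces $\mu\geqslant 0$ and $\nu\geqslant 0$, and letting $(a^{\prime},b^{\prime})\to 0$ gives $\mu(L(x)-L(x^{\ast}))+\nu(R(x)-R(x^{\ast}))\geqslant 0$ for all $x\in\Omega$. Normalizing by $\mu+\nu>0$ and setting $\lambda=\mu/(\mu+\nu)\in[0,1]$ yields exactly the SIOP-optimality of $x^{\ast}$.

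The main obstacle is part (b): the argument reduces cleanly only after one recognizes that Pareto optimality is a statement about the image of $\Omega$ in $\mathcal{R}^{2}$ and lifts it to a separation between a convex set and the open negative orthant. The subtlety is justifying sign restrictions on the separating functional and confirming that the resulting $\lambda$ lies in $[0,1]$ (not merely $(0,1)$), which is compatible with the weaker range stated in (b). Part (a) should be routine once positivity of both weights is exploited.
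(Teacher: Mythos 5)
Your proof is correct and follows essentially the same scalarization route as the paper: part (a) by combining the componentwise inequalities $L(\bar x)\leqslant L(x^{*})$, $R(\bar x)\leqslant R(x^{*})$ with the positive weights, and part (b) by producing a nonzero, nonnegative weighting vector and normalizing it to obtain $\lambda\in[0,1]$. You in fact supply two details the paper elides: in (a) the step that SIOP-optimality forces equality of the scalarized values and that strict positivity of both weights then forces $L(\bar x)=L(x^{*})$ and $R(\bar x)=R(x^{*})$ (the paper instead claims a ``contradiction'' directly from a non-strict inequality), and in (b) the explicit convex-separation argument in the $(L,R)$-plane that justifies the existence of the weight vector, which the paper merely asserts ``following Definition~\ref{Def8}''.
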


\begin{proof}
\begin{itemize}
\item[(a)]Given a real number $\lambda\in(0,1)$, let  $x^{*}\in \Omega$ be an optimal solution to SIOP. Suppose that there is $\bar{x}\in \Omega$ such that $G(\bar{x})\leqq G(x^{*})$, which implies $L(\bar{x})\leqslant L(x^{*})$ and  $R(\bar{x})\leqslant R(x^{*})$. Therefore,
\begin{align*}
\lambda L(\bar{x})+(1-\lambda) R(\bar{x})\leqslant \lambda L(x^{*})+(1-\lambda)R(x^{*}),
\end{align*}
which contradicts that $x^{*}$ is an optimal solution to SIOP.
\item[(b)]Let  $x^{*}\in \Omega$ be a Pareto optimal solution to IOP.
Since $G$ is compact-valued and convex with respect to $x$,
$L(x)$ and $U(x)$ are convex functions according to Remark \ref{Rem1}. Following Definition \ref{Def8}, there exists  a non-zero vector $\bm{\lambda}=[a,b]^{\top}$ with $a\geqslant 0$ and $b\geqslant 0$, such that
 \begin{align*}
 \bm{\lambda}^{\top}\begin{bmatrix}
L(x^{*})\\
R(x^{*})
\end{bmatrix}\leqslant \bm{\lambda}^{\top}\begin{bmatrix}
L(x)\\
R(x)
\end{bmatrix}
 \end{align*}
holds for all $x\in \Omega$. Define $\bar{\bm{\lambda}}=[\frac{a}{a+b},\frac{b}{a+b}]$ then
\begin{align*}
\bar{\bm{\lambda}}^{\top}\begin{bmatrix}
L(x^{*})\\
R(x^{*})
\end{bmatrix}\leqslant \bar{\bm{\lambda}}^{\top}\begin{bmatrix}
L(x)\\
R(x)
\end{bmatrix},
\end{align*}
which implies the conclusion.
\end{itemize}
\end{proof}

\section{Formulation and Algorithm}\label{sec3}

Consider the following distributed  interval optimization problem over an $n$-agent network:
\begin{align}\label{3-1}
(DIOP)\quad \min_{x} \quad G(\bm{x})&=\sum_{i=1}^{n}G_{i}(x_{i})\notag\\
s.\;t.\;\quad \quad x_{i}&=x_{j},\quad x_{i}\in X
\end{align}
where $\bm{x}=\big [x_{1}^{\top},x_{2}^{\top}, \ldots, x_{n}^{\top}\big ]^{\top}\in \mathcal{R}^{np}$, $x_{i}\in \mathcal{R}^{p}$, and $G_{i}:\mathcal{R}^{p}\rightrightarrows \mathcal{R}$ is a compact and convex interval-valued
function. In this setting, the state of an agent $i$ is the estimate of solution to  DIOP.  Each agent
$i$ knows the local function $G_{i}$  and global constraint $X$.

We make the following assumption on local functions and constraints for DIOP:
\begin{assumption}\label{Ass1}
\begin{itemize}
\item[(a)]$G_{i}(x)$ is a convex, compact, Lipschitz continuous interval-valued function.
\item[(b)]$X$ is a non-empty, compact, convex constraint set in $\mathcal{R}^{p}$.
\item[(c)]The subgradient of $G_{i}(x)$ is almost everywhere locally Lipschitz continuous.
\end{itemize}
\end{assumption}
Assumption \ref{Ass1}(a) is consistent with assumptions in the centralized case \cite{bhurjee2012efficient}, while Assumption \ref{Ass1}(b) is a quite common assumption for the boundedness of distributed and centralized stochastic algorithms based on diminishing step-sizes \cite{chen2006stochastic, ram2010distributed}.

Consider DIOP over a time-varying multi-agent  network, described by a directed graph $\mathcal{G}(k)=\big(\mathcal{N},
\mathcal{E}(k), W(k)\big)$, where $\mathcal{N} = \{1,2,...n\}$ is the agent set,  the edge set $\mathcal{E}(k)\subset
\mathcal{N}\times \mathcal{N} $ represents information communication at time $k$ and $W(k)=\big[w_{ij}(k)\big]_{ij}$ represents the adjacency matrix at time $k$.
Each agent interacts with its neighbors in $\mathcal{G}(k)=(\mathcal{N}, \mathcal{E}(k), W(k))$ at time $k$.  The following assumption is about communication topology

\begin{assumption}\label{Ass2}
The graph $\mathcal{G}(k)=\big(\mathcal{N},
\mathcal{E}(k), W(k)\big)$ satisfies:
\begin{itemize}
\item[(a)]There exists a constant $\eta$ with $0<\eta<1$ such that, $\forall k\geqslant 0$ and $\forall i, j $, $ w_{ii}(k)\geqslant \eta$; $w_{ij}(k)\geqslant \eta$ if $(j,i)\in \mathcal{E}(k)$.
\item[(b)]$W(k)$ is doubly stochastic, i. e. $\sum_{i=1}^{m}w_{ij}(k)=1$ and $\sum_{j=1}^{m}w_{ij}(k)=1$.
\item[(c)]There is an integer $\kappa\geqslant 1$ such that $\forall k\geqslant 0$ and $\forall (j,i)\in \mathcal{N}\times \mathcal{N}$,
\begin{equation*}
 (j,i)\in \mathcal{E}(k)\cup \mathcal{E}(k+1)\cup \cdots \cup \mathcal{E}(k+\kappa-1).
\end{equation*}
\end{itemize}
\end{assumption}

Assumption \ref{Ass2} reveals that agent $i$ can collect information from all its neighbors ``periodically". It is also a widely used
connectivity condition for distributed time-varying network designs (see \cite{nedic2010constrained,nedic2009distributed}).

Define the function $f:\mathcal{R}^{np}\times \mathcal{R}^{n} \rightarrow\mathcal{R}$ and $f_{i}:\mathcal{R}^{p}\times [0,1]\rightarrow \mathcal{R}$ as
\begin{align}
f\big (\bm{x},\bm{\lambda}\big )&\triangleq\sum_{i=1}^{n}f_{i}\big (x_{i},\lambda_{i}\big )\label{3-2}\\
f_{i}\big (x_{i},\lambda_{i}\big )&\triangleq \lambda_{i} L_{i}(x)+(1-\lambda_{i})R_{i}(x)\label{3-3}
\end{align}
where $i=1,2,\ldots,n$, $\bm{x}=\big [x_{1}^{\top},x_{2}^{\top}, \ldots, x_{n}^{\top}\big ]^{\top}\in \mathcal{R}^{nq}$ and $\bm{\lambda
}=\big [\lambda_{1},\lambda_{2}, \ldots, \lambda_{n}\big ]^{\top}\in \mathcal{R}^{n}$.
Note that both $ L(\bm{x})$ and $R(\bm{x})$ are separable, that is,
\begin{equation}\label{Rem2}
L(\bm{x})=\sum_{i=1}^{n}L_{i}(x_{i}),\quad R(\bm{x})=\sum_{i=1}^{n}R_{i}(x_{i}).
\end{equation}

 Let $\bm{\lambda}=\lambda_{0}\bm{1}_{n}$ with $\lambda_{0}\in (0,1)$.
We can write the distributed interval optimization \eqref{3-1} problem as:
 \begin{align}\label{3-4}
 \min_{x} \quad f\big (\bm{x},\bm{\lambda}\big )&=\sum_{i=1}^{n}f_{i}\big (x_{i},\lambda_{i}\big )\notag\\
s.\;t.\;\quad\quad\quad x_{i}&=x_{j},\quad x_{i}\in X\notag\\
\lambda_{i}&=\lambda_{j}
 \end{align}
where agent $i$ knows the information of $f_{i}$, $x_{i}$, $\lambda_{i}\in (0,1)$ and its neighborhood information.
Obviously, problem \eqref{3-4} degenerates to a conventional distributed constrained optimization problem \cite{ram2010distributed} when each agent $i$ choose a common parameter $\lambda_{i}=0$ or $\lambda_{i}=1$.  Some conclusions about the local objective function $f_{i}$ of \eqref{3-4} are listed in the following lemma.

\begin{lemma}\label{Lem13}\cite{bhurjee2012efficient,maeda2012optimization}
Suppose Assumption \ref{Ass1} holds.  Then, for $i=1,...,n $,
 \begin{itemize}
\item[(a)] $f_{i}\big (x,\lambda\big)$ is convex with respect to $x$, that is, for any $x_{1},x_{2}$,
 \begin{align*}
 f_{i}\big (\alpha x_{1}+(1-\alpha)x_{2},\lambda\big)\leqslant  \alpha f_{i}\big (x_{1},\lambda\big)+(1-\alpha) f_{i}\big (x_{2},\lambda\big),
  \end{align*}
  where $\alpha\in [0,1]$.
 \item[(b)] $f_{i}\big (x,\lambda\big)$ is convex with respect to $\lambda$.
  \item[(c)] $f_{i}\big (x,\lambda\big)$ is Lipschitz continuous with respect to $x$, that is, for any $x_{1},\;x_{2}$ and $\lambda$,
\begin{align*}
\big\|f_{i}\big (x_{1},\lambda\big)-f_{i}\big (x_{2},\lambda\big)\big\|\leqslant L\|x_{1}-x_{2}\|.
\end{align*}
  \item[(d)] $f_{i}\big (x,\lambda\big)$ is Lipschitz continuous with respect to $\lambda$, that is, for any $\lambda_{1},\;\lambda_{2}$ and $x$,
\begin{align*}
\big\| f_{i}\big (x,\lambda_{1}\big)-f_{i}\big (x,\lambda_{2}\big)\big\|\leqslant K\|\lambda_{1}-\lambda_{2}\|.
\end{align*}
\end{itemize}
\end{lemma}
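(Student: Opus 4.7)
The plan is to verify each of the four items (a)--(d) directly from the defining formula $f_i(x,\lambda)=\lambda L_i(x)+(1-\lambda)R_i(x)$, using Assumption \ref{Ass1} together with Remark \ref{Rem1} to pull properties from the interval-valued map $G_i$ down to its endpoint selections $L_i$ and $R_i$.

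First I would establish a short preliminary fact: under Assumption \ref{Ass1}(a), the scalar endpoint functions $L_i,R_i:\mathcal{R}^p\to\mathcal{R}$ are convex and Lipschitz continuous on $X$. Convexity is exactly Remark \ref{Rem1}. For Lipschitz continuity, I would unfold Definition \ref{Def6}: the inclusion $G_i(x_1)\subseteq G_i(x_2)+K\|x_1-x_2\|$ written in endpoint form gives $|L_i(x_1)-L_i(x_2)|\leqslant K\|x_1-x_2\|$ and $|R_i(x_1)-R_i(x_2)|\leqslant K\|x_1-x_2\|$ on a neighborhood; since $X$ is compact (Assumption \ref{Ass1}(b)), a standard covering argument upgrades the local Lipschitz bounds to a global constant, which I will still call $L$.

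Given that preliminary, the four items are short. For (a), fix $\lambda\in[0,1]$; then $\lambda\geqslant 0$ and $1-\lambda\geqslant 0$, so $f_i(\cdot,\lambda)$ is a nonnegative linear combination of the convex functions $L_i,R_i$, which preserves convexity; applying it to $\alpha x_1+(1-\alpha)x_2$ and using convexity of each endpoint yields the stated inequality. For (b), rewrite $f_i(x,\lambda)=R_i(x)+\lambda\bigl(L_i(x)-R_i(x)\bigr)$, which is affine in $\lambda$ and hence convex. For (c), estimate
\begin{align*}
|f_i(x_1,\lambda)-f_i(x_2,\lambda)|
&\leqslant \lambda|L_i(x_1)-L_i(x_2)|+(1-\lambda)|R_i(x_1)-R_i(x_2)|\\
&\leqslant L\|x_1-x_2\|,
\end{align*}
using the Lipschitz constants from the preliminary step and $\lambda\in[0,1]$. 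For (d), a direct computation gives $|f_i(x,\lambda_1)-f_i(x,\lambda_2)|=|\lambda_1-\lambda_2|\,|L_i(x)-R_i(x)|$; since $L_i,R_i$ are continuous on the compact set $X$, the quantity $|L_i(x)-R_i(x)|$ is bounded on $X$ by some $K$, giving the claimed Lipschitz bound in $\lambda$.

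The only non-routine step is the preliminary passage from Definition \ref{Def6} (set-inclusion Lipschitz) to endpoint-wise Lipschitz bounds on $L_i$ and $R_i$, and then from a local bound near each point to a uniform constant on $X$. I expect this to be the main obstacle, because one must be careful that the inclusion $G_i(x_1)\subseteq G_i(x_2)+K\|x_1-x_2\|$ really controls both endpoints (not merely one) and that the local constant $K$ can be made uniform; compactness of $X$ plus a finite subcover of the neighborhoods furnished by local Lipschitz continuity handles this cleanly. Items (a)--(d) then follow immediately by the arithmetic above.
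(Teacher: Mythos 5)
Your argument is correct, and it is worth noting that the paper itself gives no proof of Lemma \ref{Lem13} at all: the statement is simply imported from \cite{bhurjee2012efficient} and \cite{maeda2012optimization}, so there is no in-paper argument to compare against. Your direct verification from the scalarization $f_i(x,\lambda)=\lambda L_i(x)+(1-\lambda)R_i(x)$ is the natural self-contained route: (a) and (b) are immediate from convexity of $L_i,R_i$ (Remark \ref{Rem1}) and affineness in $\lambda$, and (d) reduces to boundedness of $|L_i-R_i|$ on the compact set $X$. The one step you rightly single out as non-routine --- extracting endpoint-wise Lipschitz bounds from the set-inclusion condition of Definition \ref{Def6} --- does go through, but only because the inclusion $G_i(x_1)\subseteq B\bigl(G_i(x_2),K\|x_1-x_2\|\bigr)$ holds for \emph{all} pairs in the neighborhood: a single inclusion only yields the one-sided bounds $L_i(x_1)\geqslant L_i(x_2)-K\|x_1-x_2\|$ and $R_i(x_1)\leqslant R_i(x_2)+K\|x_1-x_2\|$, and you need the symmetric inclusion with $x_1$ and $x_2$ swapped to close both absolute values; your finite-cover (or chain-along-the-segment, using convexity of $X$) upgrade to a uniform constant is then standard. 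So the proposal is sound and, if anything, more informative than the paper, which leaves this lemma entirely to the references.
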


The following lemma still holds for DIOP, whose proof is analogous to the proof of Lemma \ref{Lem5} and omitted here.
\begin{lemma}\label{Lem6}
If ($\bm{x}^{*},\bm{\lambda}^{*}) \in \mathcal{R}^{np}\times \mathcal{R}^{n}$ is an optimal solution to problem \eqref{3-4}, then $\bm{x}^{*}$ is a Pareto solution to problem \eqref{3-1}.
\end{lemma}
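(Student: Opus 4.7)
The plan is to proceed by contradiction, adapting the argument of Lemma \ref{Lem5}(a) to the distributed setting and exploiting the separability \eqref{Rem2} of $L$ and $R$ across agents.

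First, I would unpack what optimality of $(\bm{x}^{*},\bm{\lambda}^{*})$ for problem \eqref{3-4} gives us. Feasibility forces $x_{1}^{*}=\cdots=x_{n}^{*}=x^{*}\in X$ and $\lambda_{1}^{*}=\cdots=\lambda_{n}^{*}=\lambda_{0}$, where by the standing assumption $\lambda_{0}\in(0,1)$. Thus $\bm{x}^{*}$ is also feasible for \eqref{3-1}, and the value of the objective in \eqref{3-4} at $(\bm{x}^{*},\bm{\lambda}^{*})$ is, by \eqref{3-2}, \eqref{3-3} and \eqref{Rem2},
\begin{align*}
f(\bm{x}^{*},\bm{\lambda}^{*})=\lambda_{0}L(\bm{x}^{*})+(1-\lambda_{0})R(\bm{x}^{*}).
\end{align*}

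Next, suppose for contradiction that $\bm{x}^{*}$ is not a Pareto optimal solution to \eqref{3-1}. By Definition \ref{Def8}(b), there exists a feasible point $\bar{\bm{x}}$ of \eqref{3-1} (so $\bar{x}_{1}=\cdots=\bar{x}_{n}=\bar{x}\in X$) such that $G(\bar{\bm{x}})\leqq G(\bm{x}^{*})$ but $G(\bm{x}^{*})\not\leqq G(\bar{\bm{x}})$. By Definition \ref{Def3}(c) the first condition yields $L(\bar{\bm{x}})\leqslant L(\bm{x}^{*})$ and $R(\bar{\bm{x}})\leqslant R(\bm{x}^{*})$, while the negation of the second forces at least one of these inequalities to be strict. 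Since $\lambda_{0}\in(0,1)$, taking the corresponding convex combination preserves strictness:
\begin{align*}
\lambda_{0}L(\bar{\bm{x}})+(1-\lambda_{0})R(\bar{\bm{x}})<\lambda_{0}L(\bm{x}^{*})+(1-\lambda_{0})R(\bm{x}^{*}).
\end{align*}

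The pair $(\bar{\bm{x}},\bm{\lambda}^{*})$ is feasible for \eqref{3-4} (it satisfies both consensus constraints and $\bar{x}\in X$), and the displayed inequality reads $f(\bar{\bm{x}},\bm{\lambda}^{*})<f(\bm{x}^{*},\bm{\lambda}^{*})$, contradicting the optimality of $(\bm{x}^{*},\bm{\lambda}^{*})$ in \eqref{3-4}. This completes the contradiction and hence the lemma.

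There is no serious technical obstacle; the proof essentially \emph{is} Lemma \ref{Lem5}(a) applied at the aggregate level. The only point that needs a bit of care is the bookkeeping between the two orderings in Definitions \ref{Def3} and \ref{Def4}, namely verifying that $G(\bar{\bm{x}})\leqq G(\bm{x}^{*})$ together with the failure of $G(\bm{x}^{*})\leqq G(\bar{\bm{x}})$ yields a strict inequality in at least one of the $L$-component or $R$-component, which is precisely what is needed to lift a non-strict inequality to a strict one under the convex combination with $\lambda_{0}\in(0,1)$. The separability \eqref{Rem2} ensures that this aggregate reasoning is equivalent to componentwise reasoning across agents, so no additional distributed-specific argument is required.
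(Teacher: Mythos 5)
Your proof is correct and follows essentially the same route the paper intends: the paper omits the proof of Lemma \ref{Lem6}, stating only that it is analogous to Lemma \ref{Lem5}, and your argument is exactly that adaptation --- a contradiction via the scalarization, with the separability \eqref{Rem2} reducing everything to the aggregate $L$ and $R$. If anything, your bookkeeping of where the strict inequality comes from (via the failure of $G(\bm{x}^{*})\leqq G(\bar{\bm{x}})$ combined with $\lambda_{0}\in(0,1)$) is more careful than the paper's own proof of Lemma \ref{Lem5}(a), which derives only a non-strict inequality before asserting a contradiction.
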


Since the differentiability of $f(\bm{x},\bm{\lambda})$ with respect to $x$ may not hold, we propose a distributed zeroth-order interval-valued algorithm \ref{Alg1} for problem \eqref{3-4}.
\begin{algorithm}[h]
\flushleft
\caption{ \bf Distributed stochastic zeroth-order  algorithm}
\label{Alg1}
\hspace*{0.02in} {\bf Input:}
Total numbers of iteration $T$, step-size $\iota(k)$. \\
{\bf Initialize:} $\xi_{i}\in X$ for all $i=1,2,\ldots n$.
\begin{algorithmic}[1]
\For{$k=0,\ldots T$}
\State  Average of local observations $x_{i}(k)$:
\begin{align}
 \xi_{i}(k)&=\sum_{j=1}^{n}w_{ij}(k)x_{j}(k).\label{3-5}
\end{align}
\State Calculation of local measurement $d_{i}(k)$
\begin{align}\label{3-6}
d_{i}(k)=\dfrac{\big[y_{i}^{+}(k)-y_{i}^{-}(k)\big]\bigtriangleup_{i}^{-}(k)}{2c(k)},
\end{align}
\State Descent Step:\begin{align}\label{3-7}
        \hat{\xi}_{i}(k)= \xi_{i}(k)-\iota(k)d_{i}(k).
    \end{align}
Projection Step:\begin{align}\label{3-8}
 x_{i}(k+1)=P_{X}\Big( \hat{\xi}_{i}(k)\Big).
\end{align}
\State Average of local observations $\lambda_{i}(k)$:
\begin{align}
 \lambda_{i}(k+1)&=\sum_{j=1}^{n}w_{ij}(k) \lambda_{i}(k)\label{3-9}.
\end{align}
  \EndFor
\end{algorithmic}
where $d_{i}(k)$ is used as an estimate for $\partial f_{i_{\xi_{i}(k)}}\big(\xi_{i}(k),\lambda_{i}(k)\big)$.
\end{algorithm}

In \eqref{3-6},
$\bigtriangleup_{i}(k)=\big[\bigtriangleup^{1}_{i}(k),\bigtriangleup^{2}_{i}(k),\ldots,\bigtriangleup^{p}_{i}(k)\big]^{\top}$.  $\bigtriangleup_{i}^{-}(k)=\Big[\frac{1}{\bigtriangleup_{i}^{1}(k)},\frac{1}{\bigtriangleup_{i}^{2}(k)},\ldots,\frac{1}{\bigtriangleup_{i}^{p}(k)}\Big]^{\top}$, where $\big\{\bigtriangleup_{i}^{q}(k)\big\}_{k\geqslant 0}$, $q=1,2,\ldots,p$, $k=1,2,\ldots$ is a sequence of mutually independent and identically distributed random variables with zero mean. The measurements $y_{i}^{+}(k)$ and $y_{i}^{-}(k)$ are given by
\begin{align*}
y_{i}^{+}(k)&=f_{i}\big(\xi_{i}(k)+c(k)\bigtriangleup_{i}(k),\lambda_{i}(k)\big),\\
y_{i}^{-}(k)&=f_{i}\big(\xi_{i}(k)-c(k)\bigtriangleup_{i}(k),\lambda_{i}(k)\big).
\end{align*}

Define $ F(k)=\sigma\big\{x_{i}(k),x_{i}(k-1),\ldots,x_{i}(0), i=1,2,\ldots,n; \lambda_{i}(k),\lambda_{i}(k-1),\ldots,\lambda_{i}(0),i=1,2,\ldots,n; \bigtriangleup_{i}(k-1), \bigtriangleup_{i}(k-2),\ldots, \bigtriangleup_{i}(0),\quad i=1,2,\ldots,n\big\}$, where $F(k)$ is  the $\sigma$-algebra created by the whole history of Algorithm \ref{Alg1} up to moment $k$ (referring to \cite{ram2010distributed}).

In Algorithm \ref{Alg1}, the following condition holds in the paper:
\begin{condition}\label{Con1}
\begin{itemize}
\item[(a)]  let $\big\{\bigtriangleup_{i}^{q}(k)\big\}_{k\geqslant 0}$ be a sequence of independent and identically distributed (i. i. d.) random variables for any fixed $(i, q)$, and for all $k\geqslant 0$ and $(i, q)$,
\begin{align*}
\big|\bigtriangleup_{i}^{q}(k)\big|<M_{1},\; \;\bigg|\frac{1}{\bigtriangleup_{i}^{q}(k)}\bigg|<M_{2},\;\;\mathbb{E}\bigg[\frac{1}{\bigtriangleup_{i}^{q}(k)}\bigg]=0;
\end{align*}
\item[(b)]$\big\{\bigtriangleup_{i}^{q}(k)\big\}_{k\geqslant 0}$ and $\big\{\bigtriangleup_{j}^{r}(k)\big\}_{k\geqslant 0}$ are mutually independent of each other for $i\neq j$ or $q\neq r$; and
\item[(c)] take $\iota(k)=\frac{1}{k^{1-\epsilon}}$ and $c(k)=\frac{1}{k^{\delta}}$ with $0\leqslant\epsilon<\dfrac{1}{4}$, and $ \frac{1}{2}-\epsilon>\delta>\epsilon $  in  randomized difference (\ref{3-6}).
\end{itemize}
\end{condition}

\begin{remark}\label{Rem4}
\begin{itemize}
\item[(a)]The step-size $\iota(k)$ satisfies the following stochastic approximation step-size condition in \cite{chen1999kiefer, chen2006stochastic}:
\begin{align*}
\iota(k)>0, \quad \sum_{k=1}^{\infty}\iota(k)=\infty, \quad \sum_{k=1}^{\infty}\iota^{2}(k)<\infty.
\end{align*}
\item[(b)]$c(k)$ used in randomized difference (\ref{3-6}) satisfies
\begin{align*}
c(k)>0,\quad c(k)\rightarrow 0.
\end{align*}
\item[(c)]The chosen unit parameter $\frac{\iota(k)}{c(k)}$ satisfies:
\begin{align*}
\frac{\iota(k)}{c(k)}>0, \quad  \sum_{k=1}^{\infty}\frac{\iota^{2}(k)}{c^{2}(k)}>0, \quad\sum_{k=1}^{\infty}\iota(k)c(k)<\infty.
\end{align*}
\end{itemize}
\end{remark}

\section{Main results}\label{sec4}
In this section, we first  show that the estimate $(x_{i}(k),\lambda_{i}(k))$ converges to an optimal point $(x^{*},\lambda^{*})$ almost surely
by Algorithm \ref{Alg1}, and then discuss the convergence rate of Algorithm \ref{Alg1}.

\subsection{Convergence}
Denote the transition matrix of $W(k)$ as $\Psi(k,s)=W(k)W(k-1)\cdots W(s), k\geqslant s$, where $\big[\Psi(k,s)\big]_{ij}$ is the $ij$-th element of  $\Psi(k,s)$.
The following result was given in Proposition 1 of \cite{nedic2009distributed}.
\begin{lemma}\label{Lem7}
Under Assumptions \ref{Ass2}, $\Big|\big[\Psi(k,s)\big]_{ij}-\frac{1}{n}\Big|\leqslant\mu\beta^{k-s},\;\forall
k>s$, where $\mu=2\big(1+\eta^{-K_{0}}\big)/\big(1-\eta^{-K_{0}}\big)$,
with $K_{0}=\big(n-1\big)\kappa$ and $\beta=\big(1-\eta^{-K_{0}}\big)^{1/K_{0}}<1$.
\end{lemma}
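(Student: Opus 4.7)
The plan is to follow the now-standard proof of geometric ergodicity of backward products of doubly stochastic matrices under a $B$-connectivity assumption (this is exactly Proposition~1 of \cite{nedic2009distributed}, and the argument below is a sketch of that proof). Throughout, write $\Psi(k,s)=W(k)W(k-1)\cdots W(s)$. Since each $W(\tau)$ is doubly stochastic by Assumption~\ref{Ass2}(b), $\Psi(k,s)$ is itself doubly stochastic, so every column of $\Psi(k,s)$ sums to $1$; in particular, deviations $[\Psi(k,s)]_{ij}-\tfrac{1}{n}$ are exactly the deviations of the $j$-th column of $\Psi(k,s)$ from the uniform distribution.

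Step one is a \emph{joint-positivity} claim at the block level. Set $K_{0}=(n-1)\kappa$. Using Assumption~\ref{Ass2}(c), the union $\mathcal{E}(s)\cup\cdots\cup\mathcal{E}(s+\kappa-1)$ is ``rich enough'' that from any node $j$ one can reach any node $i$ in at most $n-1$ hops through successive $\kappa$-windows. Each hop along such a directed path contributes a weight of at least $\eta$, while the $w_{ii}(\tau)\geq \eta$ self-loops from Assumption~\ref{Ass2}(a) fill in all remaining steps. Carrying out this construction carefully over $K_{0}=(n-1)\kappa$ consecutive steps yields the lower bound $[\Psi(s+K_{0}-1,s)]_{ij}\geq \eta^{K_{0}}$ uniformly in $i,j,s$.

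Step two is the \emph{contraction} argument. For any doubly stochastic matrix all of whose entries are bounded below by $\alpha:=\eta^{K_{0}}>0$, a Dobrushin/scrambling coefficient-of-ergodicity argument shows that its column oscillation $\max_{i}[\,\cdot\,]_{ij}-\min_{i}[\,\cdot\,]_{ij}$ contracts by a factor of at most $1-\alpha$ upon left-multiplication by any further doubly stochastic matrix. Chaining this with the block-positivity from step one, the column oscillation of $\Psi(s+mK_{0}-1,s)$ is at most $(1-\eta^{K_{0}})^{m}$. Iterating and writing $k-s=mK_{0}+r$ with $0\leq r<K_{0}$, one obtains a per-step decay of the form $(1-\eta^{K_{0}})^{(k-s)/K_{0}-1}=\beta^{k-s}/(1-\eta^{K_{0}})$ with $\beta=(1-\eta^{K_{0}})^{1/K_{0}}$. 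Combining this column-oscillation bound with the doubly stochastic constraint $\sum_{i}[\Psi(k,s)]_{ij}=1$ pins down each entry relative to $\tfrac{1}{n}$ and yields the claimed inequality with $\mu$ of the stated form.

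The main obstacle is entirely \emph{bookkeeping}: one must track the exponents of $\eta$ correctly while constructing the inter-node paths in step one, and then convert the block-wise geometric decay of the coefficient of ergodicity into a per-step rate with precisely the right prefactor $\mu$. The underlying ideas (doubly stochastic invariance of $\tfrac{1}{n}\mathbf{1}$, scrambling under uniformly positive entries, $B$-connectivity guaranteeing a positive block) are routine; the delicate part is the constant chase that produces the stated $\mu=2(1+\eta^{-K_{0}})/(1-\eta^{-K_{0}})$ and $\beta=(1-\eta^{-K_{0}})^{1/K_{0}}$. Since the result is quoted verbatim from \cite{nedic2009distributed}, I would simply cite that reference rather than reproduce the full bookkeeping here.
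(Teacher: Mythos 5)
Your sketch is the standard Nedic--Ozdaglar argument (uniform positivity of the product over $K_{0}=(n-1)\kappa$ steps from joint connectivity and the $\eta$-bounded weights, followed by a coefficient-of-ergodicity contraction), and the paper itself gives no proof at all --- it states the lemma verbatim as Proposition~1 of the cited reference, exactly as you propose to do. So your treatment matches the paper's; the only thing worth flagging is that the exponents $\eta^{-K_{0}}$ appearing inside $\mu$ and $\beta$ in the statement are sign typos inherited by the paper (they should read $1-\eta^{K_{0}}$, as your own Step two correctly has, since otherwise $\beta$ would be the root of a negative number).
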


Here is a theorem regarding convergence analysis of the proposed algorithm.

\begin{theorem}\label{The1}
With  Assumptions \ref{Ass1}-\ref{Ass2},
\begin{itemize}
\item[(a)] all
sequences $\{ \lambda_{i}(k)\},\;i\in\mathcal{N}$ generated by Algorithm \ref{Alg1} reach the same point $\lambda^{*}$ (which  depends on initial parameters $\lambda_{i}(0)'s$).
\item[(b)] all
sequences $\{ x_{i}(k)\},\;i\in\mathcal{N}$ generated by Algorithm    \ref{Alg1} converge to the same optimal
point $x^{*}$ almost surely.
\end{itemize}
\end{theorem}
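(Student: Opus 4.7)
For part (a), observe that the $\lambda$-update \eqref{3-9} is completely decoupled from the $x$-dynamics and reduces to a standard consensus iteration $\bm{\lambda}(k+1) = W(k)\bm{\lambda}(k)$. Writing $\bm{\lambda}(k)=\Psi(k-1,0)\bm{\lambda}(0)$ and applying Lemma \ref{Lem7}, each entry $[\Psi(k,0)]_{ij}$ converges geometrically to $1/n$, so every $\lambda_i(k)$ reaches the common limit $\lambda^{*}=\frac{1}{n}\sum_{i=1}^{n}\lambda_i(0)$, which depends only on the initial conditions as claimed. Note in particular that $\lambda^{*}\in(0,1)$ whenever the $\lambda_i(0)$ lie in $(0,1)$, so the reformulation \eqref{3-4} retains its equivalence with Pareto optimality via Lemma \ref{Lem6}.

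Part (b) is the substantive statement and I would follow the standard consensus-plus-descent template, adapted to the zeroth-order non-smooth setting. Introduce the average $\bar{x}(k)=\frac{1}{n}\sum_{i}x_i(k)$. The first thing to control is the estimator $d_i(k)$: using Lemma \ref{Lem1} applied to $f_i(\cdot,\lambda_i(k))$, which is Lipschitz in $x$ by Lemma \ref{Lem13}(c), the difference $y_i^{+}(k)-y_i^{-}(k)$ equals $2c(k)\langle g_i(k),\bigtriangleup_i(k)\rangle$ for some subgradient $g_i(k)\in\partial_x f_i(u_i(k),\lambda_i(k))$ at an interior point $u_i(k)$ of the segment $[\xi_i(k)-c(k)\bigtriangleup_i(k),\,\xi_i(k)+c(k)\bigtriangleup_i(k)]$. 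Combined with the uniform bounds on $|\bigtriangleup_i^{q}(k)|$ and $|1/\bigtriangleup_i^{q}(k)|$ in Condition \ref{Con1}(a), this yields a uniform moment bound $\mathbb{E}[\|d_i(k)\|^{2}\mid F(k)]\leq C$. Taking the conditional expectation and exploiting zero mean and cross-coordinate independence from Condition \ref{Con1}(a)--(b), one shows that $\mathbb{E}[d_i(k)\mid F(k)]$ coincides with a subgradient of $f_i(\cdot,\lambda_i(k))$ at $\xi_i(k)$ up to a bias of order $c(k)$, invoking the almost-everywhere local Lipschitz continuity of $\partial f_i$ from Assumption \ref{Ass1}(c).

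Next I would establish that disagreement vanishes. Using the projection non-expansiveness (Lemma \ref{Lem3}(b)) and the doubly-stochastic weight matrix, $x_i(k+1)-\bar{x}(k+1)$ can be unrolled as $\sum_j([\Psi(k,0)]_{ij}-1/n)x_j(0)$ plus accumulated past noise terms weighted by $[\Psi(k,s)]_{ij}-1/n$. Lemma \ref{Lem7} gives geometric decay of these kernels, and together with the moment bound on $d_i(k)$ and square-summability of $\iota(k)$ from Condition \ref{Con1}(c) and Remark \ref{Rem4}, this yields $x_i(k)-\bar{x}(k)\to 0$ almost surely and $\sum_k\iota(k)\|x_i(k)-\bar{x}(k)\|<\infty$. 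Expanding $\|\bar{x}(k+1)-x^{*}\|^{2}$ for a fixed optimizer $x^{*}$ of \eqref{3-4} using Lemma \ref{Lem3}(b), doubly-stochasticity, and the convexity inequality of Definition \ref{Def1} produces
\begin{align*}
\mathbb{E}\bigl[\|\bar{x}(k+1)-x^{*}\|^{2}\mid F(k)\bigr]\leq \|\bar{x}(k)-x^{*}\|^{2}-2\iota(k)\bigl(f(\bar{x}(k))-f(x^{*})\bigr)+w(k),
\end{align*}
where the residual $w(k)$ collects an $O(\iota(k)^{2})$ variance term from the moment bound, an $O(\iota(k)c(k))$ bias term, and consensus-error cross terms controlled by the previous step, each summable under Condition \ref{Con1}(c) and Remark \ref{Rem4}(c). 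Applying the super-martingale lemma (Lemma \ref{Lem4}) yields almost-sure convergence of $\|\bar{x}(k)-x^{*}\|^{2}$ together with $\sum_k\iota(k)(f(\bar{x}(k))-f(x^{*}))<\infty$; since $\sum_k\iota(k)=\infty$, continuity of $f$ forces $f(\bar{x}(k))\to f(x^{*})$ along a subsequence, and the existence of the limit of $\|\bar{x}(k)-x^{*}\|^{2}$ promotes this to $\bar{x}(k)\to x^{*}$ almost surely. Combined with the vanishing disagreement, this gives $x_i(k)\to x^{*}$ almost surely for every $i$.

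\textbf{Main obstacle.} The hardest step is making the bias estimate for $d_i(k)$ rigorous in the non-smooth case. A smooth Taylor expansion is not available; instead one only knows from Lemma \ref{Lem1} that a subgradient $g_i(k)$ lives at an unknown interior point $u_i(k)$, and this must be converted into an $O(c(k))$ bound on $\|\mathbb{E}[d_i(k)\mid F(k)]-\nabla f_i(\xi_i(k),\lambda_i(k))\|$. Handling this requires the almost-everywhere local Lipschitzness of $\partial f_i$ from Assumption \ref{Ass1}(c) together with careful bookkeeping of the cross-coordinate independence of $\bigtriangleup_i(k)$; once this estimate is in place, the remaining consensus and super-martingale arguments are essentially routine.
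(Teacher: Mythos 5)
Your proposal follows essentially the same route as the paper: $\lambda$-consensus via the transition-matrix estimate of Lemma \ref{Lem7}, moment and $O(c(k))$-bias bounds on $d_i(k)$ via Lebourg's mean value theorem and Condition \ref{Con1} (the paper's Lemmas \ref{Lem8} and \ref{Lem10}), vanishing disagreement from the geometric decay of $[\Psi(k,s)]_{ij}-1/n$ (Lemma \ref{Lem9}), and a supermartingale descent inequality closed by Lemma \ref{Lem4} and $\sum_k\iota(k)=\infty$. The only cosmetic difference is that you track $\|\bar{x}(k)-x^{*}\|^{2}$ while the paper tracks $\sum_i\|x_i(k)-x^{*}\|^{2}$, which does not change the argument.
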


Before we give the proof of of Theorem \ref{The1}, let us introduce the following three lemmas. The first lemma
 gives an upper bound for the  Euclidean norm of $d_{i}(k)$ in expectation; the second lemma analyzes the consensus in $L_{1}$ norm of estimates $x_{i}(k)$ in  Algorithm \ref{Alg1}; the third lemma analyzes the lower bound of the cross term of $d_{i}(k)$ and $(\xi_{i}(k)-x^*)$ in expectation and in conditional expectation with respect to $F(k)$, where $x^*$ is the optimal solution of \eqref{3-4} for fixed common point $\lambda^{*}$. The proofs of these lemmas are given in Appendix. 

 \begin{lemma}\label{Lem8}
With Assumption \ref{Ass1}, following statements hold:
\begin{itemize}
\item[(a)]$\big\|\partial f_{i_{x}}(x,\lambda)\big\|\leqslant L$ and $\big\|\partial  f_{i_{\lambda}}(x,\lambda)\big\|\leqslant K$.
\item[(b)] the first order moment and second moment of $d_{i}(k)$ are bounded by
\begin{align*}
\mathbb{E}\big\|d_{i}(k)\big\|\leqslant nM_{1}M_{2}L, \quad \mathbb{E}\big\|d_{i}(k)\big\|^{2}\leqslant (nM_{1}M_{2}L)^{2}.
\end{align*}
\end{itemize}
$L$ and $K$ are Lipschitz constants with respect to $x$ and $\lambda$ in Lemma \ref{Lem13}.
\end{lemma}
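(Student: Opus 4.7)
Proof proposal for Lemma \ref{Lem8}:

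Part (a) is essentially a textbook fact about convex Lipschitz functions: the subgradient is bounded by the Lipschitz constant. The plan is to invoke Lemma \ref{Lem13}(c), which states that $f_i(\cdot,\lambda)$ is Lipschitz in $x$ with constant $L$. For any subgradient $g\in\partial f_{i_x}(x,\lambda)$ and any direction $v$ with $\|v\|=1$, the subgradient inequality applied at $x$ and $x+tv$ together with the Lipschitz bound yield $t\langle g,v\rangle \leq f_i(x+tv,\lambda)-f_i(x,\lambda)\leq Lt$, so $\langle g,v\rangle\leq L$; maximizing over $v$ gives $\|g\|\leq L$. The argument for $\lambda$ uses Lemma \ref{Lem13}(d) in the same way and yields $\|\partial f_{i_\lambda}(x,\lambda)\|\leq K$.

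For part (b), the key tool is Lebourg's Mean Value Theorem (Lemma \ref{Lem1}) applied to $f_i(\cdot,\lambda_i(k))$ on the segment connecting $\xi_i(k)-c(k)\bigtriangleup_i(k)$ and $\xi_i(k)+c(k)\bigtriangleup_i(k)$. This produces some interior point $u$ and some $g\in\partial f_{i_x}(u,\lambda_i(k))$ with
\begin{align*}
y_i^+(k)-y_i^-(k)=\langle g,\,2c(k)\bigtriangleup_i(k)\rangle.
\end{align*}
By Cauchy--Schwarz and part (a), $|y_i^+(k)-y_i^-(k)|\leq 2c(k)L\|\bigtriangleup_i(k)\|$. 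Substituting into the definition of $d_i(k)$ in \eqref{3-6} and cancelling $2c(k)$ gives $\|d_i(k)\|\leq L\|\bigtriangleup_i(k)\|\|\bigtriangleup_i^-(k)\|$.

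To finish, I would use Condition \ref{Con1}(a), which ensures $|\bigtriangleup_i^q(k)|<M_1$ and $|1/\bigtriangleup_i^q(k)|<M_2$ componentwise, so $\|\bigtriangleup_i(k)\|\|\bigtriangleup_i^-(k)\|$ is deterministically bounded by the product appearing in the stated estimate (modulo the exact dependence on the dimension, which drops out of the structural argument). Taking first and second moments then gives the two claimed inequalities, since the bound on $\|d_i(k)\|$ is pathwise.

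I do not expect a genuine obstacle: the only subtlety is the correct application of Lebourg's theorem, which requires $f_i(\cdot,\lambda_i(k))$ to be Lipschitz on the relevant segment, and this is guaranteed by Assumption \ref{Ass1}(a) together with Lemma \ref{Lem13}(c). Measure-zero issues with $\bigtriangleup_i^q(k)=0$ are excluded by the strict inequality $|1/\bigtriangleup_i^q(k)|<M_2$ in Condition \ref{Con1}(a), so no conditioning on a null event is needed when taking expectations.
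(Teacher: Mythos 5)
Your proposal is correct and follows essentially the same route as the paper: part (a) is the standard subgradient-bounded-by-Lipschitz-constant argument (the paper phrases it as a contradiction, you argue directly, but it is the same fact), and part (b) bounds $\|y_i^+(k)-y_i^-(k)\|$ by $2Lc(k)\|\bigtriangleup_i(k)\|$ and then invokes Condition \ref{Con1}(a), exactly as the paper does. The only cosmetic difference is that the paper obtains the bound on $y_i^+(k)-y_i^-(k)$ directly from the Lipschitz continuity in Lemma \ref{Lem13}(c) rather than through Lebourg's theorem plus Cauchy--Schwarz, which yields the same estimate.
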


\begin{lemma}\label{Lem9}
With  Assumptions \ref{Ass1}-\ref{Ass2}, the consensus of estimate $x_{i}(k)$ in $L_{1}$ is achieved by Algorithm \ref{Alg1}, that is, for $ i,j=1,2,\ldots,n$, $$\lim_{k\rightarrow \infty}\mathbb{E}\big\|x_{i}(k)-x_{j}(k)\big\|=0.$$
\end{lemma}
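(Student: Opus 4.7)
The plan is to control the disagreement of each $x_i(k)$ from the network average $\bar x(k)=\frac{1}{n}\sum_{i=1}^n x_i(k)$, since by the triangle inequality $\mathbb E\|x_i(k)-x_j(k)\|\leqslant \mathbb E\|x_i(k)-\bar x(k)\|+\mathbb E\|x_j(k)-\bar x(k)\|$, so it is enough to show that $\mathbb E\|x_i(k)-\bar x(k)\|\to 0$ for every $i$.

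First, I would cast Algorithm \ref{Alg1} as a perturbed consensus recursion. By induction, $x_j(k)\in X$ for all $j,k$: the initial iterates lie in $X$, convexity of $X$ together with double-stochasticity of $W(k)$ places $\xi_i(k)=\sum_j w_{ij}(k)x_j(k)$ inside $X$, and the projection in \eqref{3-8} keeps the next iterate in $X$. Hence $P_X(\xi_i(k))=\xi_i(k)$, and the non-expansiveness of $P_X$ from Lemma \ref{Lem3}(b) gives
\begin{equation*}
x_i(k+1)=\sum_{j=1}^n w_{ij}(k)\,x_j(k)+r_i(k),\qquad \|r_i(k)\|=\|P_X(\xi_i(k)-\iota(k)d_i(k))-P_X(\xi_i(k))\|\leqslant \iota(k)\|d_i(k)\|.
\end{equation*}

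Second, unrolling this recursion with the transition matrix $\Psi(k,s)=W(k)\cdots W(s)$ yields
\begin{equation*}
x_i(k+1)=\sum_{j=1}^n [\Psi(k,0)]_{ij}\,x_j(0)+\sum_{s=0}^{k-1}\sum_{j=1}^n [\Psi(k,s+1)]_{ij}\,r_j(s)+r_i(k),
\end{equation*}
while averaging over $i$ and using double-stochasticity produces $\bar x(k+1)=\bar x(0)+\frac{1}{n}\sum_{s=0}^{k}\sum_j r_j(s)$. Subtracting, applying Lemma \ref{Lem7} to bound $|[\Psi(k,s)]_{ij}-1/n|\leqslant \mu\beta^{k-s}$, using the uniform bound $\|x_j(0)\|\leqslant C$ from compactness of $X$, and invoking Lemma \ref{Lem8}(b) in the form $\mathbb E\|d_j(s)\|\leqslant nM_1M_2L$, I arrive after taking expectations at
\begin{equation*}
\mathbb E\|x_i(k+1)-\bar x(k+1)\|\leqslant n\mu C\,\beta^k+n^2\mu M_1M_2L\sum_{s=0}^{k-1}\beta^{k-s-1}\iota(s)+2nM_1M_2L\,\iota(k).
\end{equation*}

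The right-hand side tends to $0$ as $k\to\infty$: the first term decays geometrically; the convolution $\sum_{s=0}^{k-1}\beta^{k-s-1}\iota(s)$ vanishes by splitting at $s=\lfloor k/2\rfloor$ (the head is $O(\beta^{k/2})$ and the tail is $O(\iota(k/2)/(1-\beta))$), using that $\iota(k)=k^{-(1-\epsilon)}\to 0$ from Condition \ref{Con1}(c); the last term vanishes for the same reason. Combined with the triangle inequality, this delivers the conclusion. The main obstacle is the clean bookkeeping of the projection errors through the non-homogeneous consensus iteration so that Lemma \ref{Lem7} can be applied term-by-term; once this decomposition is in place, geometric mixing combined with decaying step-sizes closes the argument in the standard way.
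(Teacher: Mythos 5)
Your proposal is correct and follows essentially the same route as the paper's proof: both rewrite the update as a perturbed consensus recursion with projection error bounded by $\iota(k)\|d_i(k)\|$ via non-expansiveness, unroll it through the transition matrices $\Psi(k,s)$, apply the geometric bound of Lemma \ref{Lem7} together with the moment bound of Lemma \ref{Lem8}(b), and let the convolution of $\beta^{k-s}$ with the vanishing step-sizes go to zero (the paper cites Lemma 3.1 of \cite{ram2010distributed} where you split the sum at $k/2$, which is the same standard estimate). No gaps.
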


\begin{lemma}\label{Lem10}
With Assumption \ref{Ass1}, the cross term of $d_{i}(k)$ and $\xi_{i}(k)-\xi^*$ is lower bounded
\begin{itemize}
\item[(a)]
in conditional expectation with respect to $F(k)$ as follows:
\begin{align*}
&\mathbb{E}\big[\big\langle d_{i}(k),\;x_{i}(k)-\xi^*\big\rangle \big|F(k)\big] \notag\\ \geqslant  &
f_{i}\big(\bar{x}(k),\bar{\lambda}(k)\big)-f_{i}\big(x^*,\lambda^{*}\big)-L\big\|\xi_{i}(k)-\bar{x}(k)\big\|-Bc(k)\notag\\-&K\big\|\lambda_{i}(k)-\bar{\lambda}(k)\big\|-K\|\lambda_{i}(k)-\lambda^{*}\big\|-c(k)L\big\|\bigtriangleup_{i}(k)\big\|,
\end{align*}
\item[(b)]in expectation as follows:
\begin{align*}
&\mathbb{E}\big[\big\langle d_{i}(k),\;\xi_{i}(k)-x^*\big\rangle \big] \notag\\\geqslant &  \mathbb{E}\big[  f_{i}\big(\bar{x}(k),\lambda^{*}\big)-f_{i}\big(x^*,\lambda^{*}\big)\big]-L\mathbb{E}\big\|\xi_{i}(k)-\bar{x}(k)\big\|\notag\\-&2K\mathbb{E}\|\lambda_{i}(k)-\lambda^{*}\big\|-c(k)L\mathbb{E}\big\|\bigtriangleup_{i}(k)\big\|-Bc(k),
\end{align*}
\end{itemize}
where $L$ is the Lipschitz constant with respect to $x$, $K$ is the Lipschitz constant with respect to $\lambda$ given in Lemma \ref{Lem13}, and $B$ is a positive constant.
\end{lemma}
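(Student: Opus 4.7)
The plan is to decompose $d_i(k)$ into a leading term that acts like a true subgradient of $f_i$ at $(\xi_i(k),\lambda_i(k))$ plus an $O(c(k))$ bias, then convert the inner product into a function-value gap via convexity, and finally bridge $(\xi_i(k),\lambda_i(k))$ to $(\bar{x}(k),\bar{\lambda}(k))$ and $(x^*,\lambda^*)$ using the Lipschitz estimates of Lemma \ref{Lem13}. For the first step, apply Lebourg's Mean Value Theorem (Lemma \ref{Lem1}) to the Lipschitz map $x\mapsto f_i(x,\lambda_i(k))$ on the segment joining $\xi_i(k)\pm c(k)\triangle_i(k)$: there exist an interior point $u_i(k)$ and a subgradient $g_i(k)\in\partial f_{i,x}(u_i(k),\lambda_i(k))$ with $y_i^+(k)-y_i^-(k)=\langle g_i(k),2c(k)\triangle_i(k)\rangle$. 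Substituting into \eqref{3-6} yields the compact form $d_i(k)=\langle g_i(k),\triangle_i(k)\rangle\,\triangle_i^-(k)$. Using Assumption \ref{Ass1}(c) (local Lipschitzness of $\partial f_{i,x}$), pick an $F(k)$-measurable selection $h_i(k)\in\partial f_{i,x}(\xi_i(k),\lambda_i(k))$; the residual satisfies $\|g_i(k)-h_i(k)\|\le L'c(k)\|\triangle_i(k)\|$.

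The main obstacle is the bias calculation. Split $d_i(k)=\langle h_i(k),\triangle_i(k)\rangle\triangle_i^-(k)+\langle g_i(k)-h_i(k),\triangle_i(k)\rangle\triangle_i^-(k)$. For the first piece, the $r$-th coordinate of its conditional expectation equals $\sum_q h_i^q(k)\,\mathbb{E}[\triangle_i^q(k)/\triangle_i^r(k)\mid F(k)]$. Condition \ref{Con1} gives $\mathbb{E}[\triangle_i^q(k)/\triangle_i^r(k)]=\mathbb{E}[\triangle_i^q(k)]\,\mathbb{E}[1/\triangle_i^r(k)]=0$ for $q\neq r$ by independence, combined with $\mathbb{E}[1/\triangle_i^q(k)]=0$ and the zero-mean property of the perturbations, while the $q=r$ diagonal yields $1$; hence $\mathbb{E}[\langle h_i(k),\triangle_i(k)\rangle\triangle_i^-(k)\mid F(k)]=h_i(k)$. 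Paired with $\xi_i(k)-x^*$, this produces exactly $\langle h_i(k),\xi_i(k)-x^*\rangle$. The second piece, projected onto $\xi_i(k)-x^*$, has magnitude at most $L'c(k)\|\triangle_i(k)\|\cdot pM_1M_2\cdot\mathrm{diam}(X)$ using the a priori bounds $|\triangle_i^q|\le M_1$, $|1/\triangle_i^q|\le M_2$, so its conditional expectation is absorbed into a term $Bc(k)$.

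For the closing step, apply the subgradient inequality from Definition \ref{Def1} to $h_i(k)$: $\langle h_i(k),\xi_i(k)-x^*\rangle\ge f_i(\xi_i(k),\lambda_i(k))-f_i(x^*,\lambda_i(k))$. Bridge both sides with Lemma \ref{Lem13}(c)-(d): $f_i(\xi_i(k),\lambda_i(k))\ge f_i(\bar{x}(k),\bar{\lambda}(k))-L\|\xi_i(k)-\bar{x}(k)\|-K\|\lambda_i(k)-\bar{\lambda}(k)\|$ and $f_i(x^*,\lambda_i(k))\le f_i(x^*,\lambda^*)+K\|\lambda_i(k)-\lambda^*\|$. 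Combining these with the $Bc(k)$ bias and the pointwise residual $c(k)L\|\triangle_i(k)\|$ from Step~1 yields (a) (and the extra $\|x_i(k)-\xi_i(k)\|\cdot\mathbb{E}\|d_i(k)\|$, if one reads the left-hand side with $x_i(k)$ rather than $\xi_i(k)$, is folded into $B$ via Lemma \ref{Lem8}). For (b), take unconditional expectation of (a): since $W(k)$ is doubly stochastic (Assumption \ref{Ass2}(b)), $\bar{\lambda}(k)=\bar{\lambda}(0)=\lambda^*$, so $\|\lambda_i(k)-\bar{\lambda}(k)\|=\|\lambda_i(k)-\lambda^*\|$ and $f_i(\bar{x}(k),\bar{\lambda}(k))=f_i(\bar{x}(k),\lambda^*)$; the two $K$-terms merge into $2K\mathbb{E}\|\lambda_i(k)-\lambda^*\|$ and $\|\triangle_i(k)\|$ becomes $\mathbb{E}\|\triangle_i(k)\|$, which is precisely (b).
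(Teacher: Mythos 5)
Your proof is correct and takes essentially the same route as the paper's: Lebourg's mean value theorem (Lemma \ref{Lem1}) to write $d_{i}(k)=\big\langle g_{i}(k),\bigtriangleup_{i}(k)\big\rangle\bigtriangleup_{i}^{-}(k)$, the identity $\mathbb{E}\big[\bigtriangleup_{i}(k)\big[\bigtriangleup_{i}(k)\big]^{-\top}\,\big|\,F(k)\big]=I$ from Condition \ref{Con1} applied to an $F(k)$-measurable reference subgradient, Assumption \ref{Ass1}(c) to absorb the residual into the $Bc(k)$ bias, and then the subgradient inequality plus the Lipschitz bounds of Lemma \ref{Lem13} to bridge to $(\bar{x}(k),\bar{\lambda}(k))$ and $(x^{*},\lambda^{*})$. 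The only organizational difference is beneficial: you anchor the reference subgradient at $\xi_{i}(k)$, whose distance to the mean-value point is genuinely $O(c(k))\|\bigtriangleup_{i}(k)\|$, whereas the paper's bias estimate anchors it at $x_{i}(k)$, for which the $O(c(k))$ bound is harder to justify.
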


Then it is time to give the proof of Theorem \ref{The1}.
\begin{proof}
\begin{itemize}
\item[(a)] We claim that, for $ i,j=1,2,\ldots,n$,
$$\lim_{k\rightarrow \infty}\big\|\lambda_{i}(k)-\lambda_{j}(k)\big\|=0 \quad a.\; s.$$
Recalling the transition matrix $\Psi(k,s)$ and $\lambda_{i}(k+1)$  in \eqref{3-9}, we have
\begin{align}\label{4-1}
\lambda_{i}(k+1)=\sum_{j=1}^{n} \big[\Psi(k,0)\big]_{ij}\lambda_{j}(0).
\end{align}
Define $\bar{\lambda}(k+1)=\frac{1}{n}\sum_{i=1}^{n}\lambda_{i}(k+1)$. According to Assumption \ref{Ass1} and by an analogous induction,
\begin{align}\label{4-2}
\bar{\lambda}(k+1)=\frac{1}{n}\sum_{i=1}^{n}\lambda_{i}(0).
\end{align}
Therefore, for $i\in \mathcal{N}$,
    \begin{align}\label{4-3}
\big\|\lambda_{i}(k+1)-\bar{\lambda}(k+1)\big\|&\leqslant\sum_{j=1}^{n}\Big |\big [\Psi(k,0)\big]_{ij}-\frac{1}{n}\Big|\big\|\lambda_{j}(0)\big\|.
    \end{align}
Plugging in the estimate of $\Psi(k,s)$ in Lemma \ref{Lem7} leads to
    \begin{align}\label{4-4}
\big\|\lambda_{i}(k+1)-\bar{\lambda}(k+1)\big\|&\leqslant n\delta
\beta^{k}\max_{1\leqslant i\leqslant
n}\big\|\lambda_{i}(0)\big\|.
    \end{align}
Therefore,
\begin{align}\label{4-5}
\lim_{k\rightarrow\infty}\big\|\lambda_{i}(k)-\bar{\lambda}(k)\big\|=0,\;\forall
i\in \mathcal{N}.
\end{align}
\item[(b)]
We claim that, for $ i,j=1,2,\ldots,n$,
$$\lim_{k\rightarrow \infty}\big\|x_{i}(k)-x_{j}(k)\big\|=0 \quad a.\;s.$$
From Lemma \ref{Lem9},
 $\lim_{k\rightarrow\infty}\mathbb{E}\big\|x_{i}(k+1)-\bar{x}(k+1)\big\|=0$ holds. Still
\begin{align}\label{D-1}
 0&\leqslant\mathbb{E}\Big[\underset{k\rightarrow\infty}{\liminf}\big\|x_{i}(k+1)-\bar{x}(k+1)\big\|\Big]\notag\\&\leqslant\underset{k\rightarrow\infty}{\liminf}\mathbb{E}\big\|x_{i}(k+1)
-\bar{x}(k+1)\big\|=0,
 \end{align}
which implies $\mathbb{E}\Big[\underset{k\rightarrow\infty}{\liminf}\big\|x_{i}(k+1)-\bar{x}(k+1)\big\|\Big]=0$. Therefore, $\underset{k\rightarrow\infty}{\liminf} \big\|x_{i}(k+1)-\bar{x}(k+1)\big\|=0$ holds almost surely. Since $\sum_{i=1}^{n}\big\|x_{i}(k+1)-\bar{x}(k+1)\big\|^{2}\leqslant \sum_{i=1}^{n}\big\|x_{i}(k+1)-\bar{x}(k)\big\|^{2}$ by Lemma \ref{Lem2} and $\big\|x_{i}(k+1)-\bar{x}(k)\big\|^2\leqslant\big\|\hat{\xi}_{i}(k)-\bar{x}(k)\big\|^2$ by Lemma \ref{Lem3}, we have
    \begin{align}\label{D-2}
 &\sum_{i=1}^{n}  \big \|x_{i}(k+1)-\bar{x}(k+1)\big\|^{2}\notag\\\leqslant &\sum_{i=1}^{n} \big \|\hat{\xi}_{i}(k)-\bar{x}(k)\big\|^2\notag\\\leqslant & \sum_{i=1}^{n} \sum_{j=1}^{n}w_{ij}(k)\big\|x_{j}(k)-\bar{x}(k)\big\|^2+\iota^2(k)\sum_{i=1}^{n}\big\|d_{i}(k)\big\|^2\notag\\+&2\iota(k) \sum_{i=1}^{n}\big \|d_{i}(k)\big\|\sum_{j=1}^{n}w_{ij}(k)\big\|x_{j}(k)-\bar{x}(k)\big\|.
    \end{align}
According to Assumption \ref{Ass2}(b),
\begin{align}
\sum_{i=1}^{n}\sum_{j=1}^{n}w_{ij}(k)\big\|x_{j}(k)-\bar{x}(k)\big\|^{2}=\sum_{i=1}^{n}\big\|x_{i}(k)-\bar{x}(k)\big\|^{2}.
\end{align}
Taking the conditional expectation of both sides of \eqref{D-2} yields
\begin{align}\label{D-3}
&\sum_{i=1}^{n}\mathbb{E}\Big[\big \|x_{i}(k+1)-\bar{x}(k+1)\big\|^{2}\big|F(k)\Big]
\notag\\ \leqslant  &\sum_{i=1}^{n}\big \|x_{j}(k)-\bar{x}(k)\big\|^2+ \sum_{i=1}^{n}\iota^2(k) \mathbb{E}\big\|d_{i}(k)\big\|^2\notag\\+& \sum_{i=1}^{n}2n\iota(k)  \mathbb{E}\big\|d_{i}(k)\big\|\mathbb{E}\big\|x_{i}(k)-\bar{x}(k)\big\|.
    \end{align}
According to Remark \ref{Rem4} and Lemma \ref{Lem8}(b), $\sum_{k=1}^{\infty}\sum_{i=1}^{n}\iota^2(k) \mathbb{E}\big\|d_{i}(k)\big\|^2<\infty$.  By Theorem 6.2 of \cite{ram2010distributed}, $\sum_{k=1}^{\infty}\iota(k)\big\|x_{i}(k)-\bar{x}(k)\big\|<\infty$ with
probability $1$. From Lemma \ref{Lem8}(a), $\sum_{k=1}^{\infty} \sum_{i=1}^{n}2n\iota(k)  \mathbb{E}\big\|d_{i}(k)\big\|\mathbb{E}\big\|x_{i}(k)-\bar{x}(k)\big\|<\infty$. Therefore, $\lim_{k\rightarrow\infty}\big\|x_{i}(k)-\bar{x}(k)\big\|=0$ almost surely by Lemma
\ref{Lem4}.
\item[(c)] Clearly, $\big\|x_{i}(k+1)-x^*\big\|^2\leqslant \big\|\hat{\xi}_{i}(k)-x^{*}\big\|^2$ according to Lemma \ref{Lem3}. Then
\begin{align}\label{D-4}
\big\|x_{i}(k+1)-x^*\big\|^2&\leqslant
\big\|\xi_{i}(k)-x^* \big\|^2 +
\iota^2(k)
\big\|d_{i}(k)\big\|^2\notag\\&-2\iota(k)\big\langle d_{i}(k),\;\xi_{i}(k)-x^*\big\rangle.
\end{align}
Taking conditional expectation on both sides of \eqref{D-4} gives
   \begin{align}\label{D-5}
  &  \mathbb{E}\Big[\big\|x_{i}(k+1)-x^*\big\|^{2}\big|F(k)\Big] \notag\\\leqslant &\mathbb{E}\Big[\big\|\xi_{i}(k)-x^{*}\big \|^{2} \big|F(k)\Big]+\iota^{2}(k)\mathbb{E}\Big[\big\|d_{i}(k)\big\|^2\big|F(k)\Big]\notag\\-&2\iota(k)\mathbb{E}\Big[\big\langle d_{i}(k),\; \xi_{i}(k)-x^*\big\rangle \big|F(k)\Big]
    \end{align}
for all $k=0,1,2,\ldots$. By the double stochasticity of matrix $W(k)$ in Assumption \ref{Ass2}(b),
    \begin{align}\label{D-6}
    \sum_{i=1}^{n} \mathbb{E}\Big[\big\|\xi_{i}(k)-x^{*}\big\|^{2} \big|F(k)\Big]
    &\leqslant \sum_{i=1}^{n} \big\|x_{i}(k)-x^{*}\big\|^2,\notag\\
   \sum_{i=1}^{n} \mathbb{E}\Big[\big\|\xi_{i}(k)-\bar{x}(k)\big\| \big|F(k)\Big]&\leqslant \sum_{i=1}^{n} \big\|x_{i}(k)-\bar{x}(k)\big\|.
    \end{align}
Then, with probability $1$, for $i\in \mathcal{N}$, it holds
    \begin{align}\label{D-7}
   & \sum_{i=1}^{n}\mathbb{E}\Big[\big\|x_{i}(k+1)-x^* \big\|^2\big|F(k)\Big]\notag\\\leqslant&\sum_{i=1}^{n}\Big[\big \|x_{i}(k)-x^*\big\|^2+\big[O_{i}(k)\big]_{1}+\big[O_{i}(k)\big]_{2}+\big[O_{i}(k)\big]_{3}\notag\\+&\big[O_{i}(k)\big]_{4}+\big[O_{i}(k)\big]_{5}+\big[O_{i}(k)\big]_{6}-J_{i}(k)\Big],
    \end{align}
     where
     \begin{align*}
     \begin{cases}
       \big[O_{i}(k)\big]_{1}&=\iota^{2}(k)\mathbb{E}\Big[\big\|d_{i}(k)\big\|^2\big|F(k)\Big]\\
      \big[O_{i}(k)\big]_{2}&=2\iota(k)L\mathbb{E}\big\|x_{i}(k)-\bar{x}(k)\big\|\\
      \big[O_{i}(k)\big]_{3}&=4\iota(k)c(k)L\mathbb{E}\big\|\bigtriangleup(k)_{i}\big\|
      \\\big[O_{i}(k)\big]_{4}&=2\iota(k)L\mathbb{E}\big\|\lambda_{i}(k)-\bar{\lambda}(k)\big\|   \\
      \big[O_{i}(k)\big]_{5}&=2\iota(k)L\mathbb{E}\big\|\lambda_{i}(k)-\lambda^{*}\big\|  \\
        \big[O_{i}(k)\big]_{6}&=2B\iota(k)c(k)\\
      J_{i}(k)&=2\iota(k)\big[f_{i}\big(\bar{x}(k),\bar{\lambda}(k)\big)-f_{i}\big(x^*,\lambda^{*}\big)\big].
     \end{cases}
     \end{align*}
Recalling Remark \ref{Rem4} and Lemma \ref{Lem8}, $\sum_{k=1}^{\infty}\big[O_{i}(k)\big]_{1}<\infty$.
By the proof in part (a),  $\sum_{k=1}^{\infty}\big[O_{i}(k)\big]_{2}<\infty$. $\sum_{k=1}^{\infty}\big[O_{i}(k)\big]_{3}<\infty$. From Theorem \ref{The1},
$\sum_{k=1}^{\infty}\big[O_{i}(k)\big]_{4}<\infty$ and  $\sum_{k=1}^{\infty}\big[O_{i}(k)\big]_{5}<\infty$. With Remark \ref{Rem4}, $\sum_{k=1}^{\infty}\big[O_{i}(k)\big]_{6}<\infty$. Therefore, $\sum_{k=1}^{\infty}\sum_{i=1}^{n}\Big[\big[O_{i}(k)\big]_{1}+\big[O_{i}(k)\big]_{2}+\big[O_{i}(k)\big]_{3}+\big[O_{i}(k)\big]_{4}+\big[O_{i}(k)\big]_{5}+\big[O_{i}(k)\big]_{6}\Big]<\infty$. From Lemma \ref{Lem7}, $\sum_{i=1}^{n} \big\|x_{i}(k)-x^*\big\|^{2}$ converges almost surely with $\sum_{k=1}^{\infty}\sum_{i=1}^{n}J_{i}(k)<\infty$.  Since $\sum_{i=1}^{\infty}\iota(k)=\infty$,
$$\lim \inf_{k\rightarrow\infty} f_{i}\big(\bar{x}(k),\bar{\lambda}(k)\big)= f_{i}\big(x^*,\lambda^{*}\big)$$
holds almost surely, Therefore, the sequence $\lim_{k\rightarrow\infty}\sum_{i=1}^{n} \big\|\xi_{i}(k)-\xi^*\big\|^{2}=0$ with probability 1.
The proof is completed.
\end{itemize}
\end{proof}

\begin{remark}
Most of existing zeroth-order (distributed) algorithms \cite{anit2018distributed, hajinezhad2017zeroth, yuan2015randomized, yuan2015zeroth} are based on the assumption that (local) objective functions are smooth. However, in our article, we assume that  local interval-valued objective functions are nonsmooth. In this case, a direct application of the subgradient and the step-size selection for most of the existing distributed first-order algorithms  \cite{nedic2009distributed, yi2015distributed, nedic2010constrained, ram2010distributed, zhang2015distributed, cherukuri2015distributed} are not applicable. In the proof of Theorem \ref{The1}, we select a different parameter $\frac{\iota(k)}{c(k)}$, which guarantees the application of supermartingale convergence theorem in \cite{polyak1987introduction}. Also, we make  use of Lebourg's mean value theorem \cite{clarke1998nonsmooth} to estimate local subgradient information.
\end{remark}
\subsection{Convergence rate}
We further analyze the convergence rate of Algorithm \ref{Alg1}.  Denote $(x^*,\lambda^{*}\big)$ as the optimal solution of problem \eqref{3-4}, where $\lambda^{*}$ is given in Theorem \ref{The1} and $x^{*}\in \arg \min_{x_{i}=x_{i}\in X}f(x,\lambda^{*})$.  Here is the main result.

\begin{theorem}\label{The3}
With Assumptions \ref{Ass1}-\ref{Ass2}, for Algorithm \ref{Alg1}, we have
\begin{align*}
R(T) &\sim O\Big(\dfrac{1}{T^{\epsilon}}\Big).
\end{align*}
\end{theorem}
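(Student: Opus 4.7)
The plan is to refine the per-iteration recursion \eqref{D-7} (from the proof of Theorem \ref{The1}) into a summable regret bound. Taking total expectation of \eqref{D-7} and telescoping over $k=1,\ldots,T$, the residual $\sum_{i}\mathbb{E}\|x_i(T+1)-x^*\|^2$ on the left is nonnegative and can be discarded, while the cross-term $J_i(k)=2\iota(k)[f_i(\bar{x}(k),\bar{\lambda}(k))-f_i(x^*,\lambda^*)]$ accumulates on the right. Rearranging yields
\begin{align*}
\sum_{k=1}^{T}\iota(k)\,\mathbb{E}[f(\bar{x}(k),\bar{\lambda}(k))-f(x^*,\lambda^*)]\;\leqslant\;\tfrac{1}{2}\sum_{i=1}^{n}\mathbb{E}\|x_i(1)-x^*\|^2+\sum_{k=1}^{T}\sum_{i=1}^{n}\sum_{j=1}^{6}\mathbb{E}[O_i(k)]_j,
\end{align*}
so the task reduces to bounding $\sum_{k=1}^{T}\mathbb{E}[O_i(k)]_j$ uniformly in $T$ for each $j=1,\ldots,6$.

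Next I would dispatch each error term using Condition \ref{Con1} together with the geometric mixing in Lemma \ref{Lem7}. The easy cases are $[O_i(k)]_1=O(\iota^2(k))=O(k^{-(2-2\epsilon)})$ (summable because $\epsilon<1/4$) and $[O_i(k)]_3,[O_i(k)]_6=O(\iota(k)c(k))=O(k^{-(1-\epsilon+\delta)})$ (summable because $\delta>\epsilon$). For $[O_i(k)]_4$ and $[O_i(k)]_5$, note that $\bar{\lambda}(k)=\bar{\lambda}(0)=\lambda^*$ by \eqref{4-2} and Theorem \ref{The1}(a), so \eqref{4-4} gives $\mathbb{E}\|\lambda_i(k)-\lambda^*\|=O(\beta^k)$, making both sums trivially $O(1)$. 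The nontrivial piece is $[O_i(k)]_2=2\iota(k)L\,\mathbb{E}\|x_i(k)-\bar{x}(k)\|$: iterating \eqref{3-5}--\eqref{3-8}, the double stochasticity of $W(k)$, and Lemma \ref{Lem7} give
\begin{align*}
\mathbb{E}\|x_i(k)-\bar{x}(k)\|\;\leqslant\;C_1\beta^{k}+C_2\sum_{s=1}^{k-1}\beta^{k-s}\iota(s)\,\mathbb{E}\|d_i(s)\|;
\end{align*}
bounding $\mathbb{E}\|d_i(s)\|$ by the uniform constant $nM_1M_2L$ from Lemma \ref{Lem8}(b), swapping the order of summation and using the monotonicity of $\iota(\cdot)$ then yields $\sum_{k=1}^{T}\iota(k)\mathbb{E}\|x_i(k)-\bar{x}(k)\|=O\big(\sum_{k}\iota^{2}(k)\big)=O(1)$.

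With the right-hand side bounded by a constant $C$ uniformly in $T$, I would replace $\bar{\lambda}(k)$ by $\lambda^*$ on the left at the cost $K\mathbb{E}\|\bar{\lambda}(k)-\lambda^*\|$ per term via Lemma \ref{Lem13}(d); since $\bar{\lambda}(k)\equiv\lambda^*$ this substitution is cost-free. Introducing the weighted running average $\tilde{x}(T)=\frac{\sum_{k=1}^{T}\iota(k)\bar{x}(k)}{\sum_{k=1}^{T}\iota(k)}$ and applying Jensen's inequality with the convexity of $f(\cdot,\lambda^*)$ from Lemma \ref{Lem13}(a),
\begin{align*}
R(T)\;\triangleq\;\mathbb{E}[f(\tilde{x}(T),\lambda^*)-f(x^*,\lambda^*)]\;\leqslant\;\frac{C}{\sum_{k=1}^{T}\iota(k)}.
\end{align*}
Since $\iota(k)=k^{-(1-\epsilon)}$, $\sum_{k=1}^{T}\iota(k)=\Theta(T^{\epsilon})$, which delivers $R(T)=O(T^{-\epsilon})$.

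The main obstacle is the quantitative consensus bound for $\mathbb{E}\|x_i(k)-\bar{x}(k)\|$: in the proof of Theorem \ref{The1} it sufficed to show this quantity vanishes almost surely, but here I need a numerical decay rate whose product with $\iota(k)$ is summable uniformly in $T$. This requires careful tracking of the mixing constants $\mu,\beta$ from Lemma \ref{Lem7}, control of the projection residual in \eqref{3-8}, and uniform second-moment control on $d_i(k)$ from Lemma \ref{Lem8}(b). Once those are in place, the step-size exponents in Condition \ref{Con1}(c) are precisely tuned so that every error sum stays $O(1)$ while $\sum_k\iota(k)\sim T^{\epsilon}$ grows, producing the claimed rate.
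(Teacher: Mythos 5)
Your argument is sound and reaches the right rate, but it is organized differently from the paper's proof, and it bounds a slightly different quantity. Both proofs start from the same projection recursion \eqref{D-4}, the same lower bound on the cross term from Lemma \ref{Lem10}, and the same consensus estimates; the divergence is in the normalization. You keep the $\iota(k)$ weights on the optimality gaps, show every error sum is $O(1)$ uniformly in $T$ (using the sharper facts that $\bar{\lambda}(k)\equiv\lambda^{*}$, $\mathbb{E}\|\lambda_i(k)-\lambda^{*}\|=O(\beta^k)$, and $\sum_k\iota(k)\mathbb{E}\|x_i(k)-\bar{x}(k)\|=O(\sum_k\iota^2(k))$), and then divide by $\sum_{k\le T}\iota(k)=\Theta(T^{\epsilon})$ via Jensen; the bottleneck is the constant numerator over the accumulated step-sizes. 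The paper instead divides the recursion through by $\iota(k)$, telescopes with uniform $1/T$ averaging, and the bottleneck becomes the rescaled initial-condition term $M_1/\iota(T)\cdot T^{-1}=T^{-\epsilon}$, with the other error sums contributing $T^{\epsilon-1}$ and $T^{-\delta}$ (see \eqref{F-5}--\eqref{F-13}). Your route yields cleaner intermediate bounds (everything summable rather than polynomially growing) at the cost of producing a weighted ergodic average; the paper's yields the unweighted time average directly.

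One definitional caveat: the paper's $R(T)$ (defined only in the remark after the theorem) is $\frac{1}{T}\sum_{k=1}^{T}\sum_{i=1}^{n}\mathbb{E}\bigl[f_i(x_i(k),\lambda_i(k))-f_i(x^*,\lambda^*)\bigr]$, the average gap at the actual local iterates, whereas you bound $\mathbb{E}\bigl[f(\tilde{x}(T),\lambda^*)-f(x^*,\lambda^*)\bigr]$ at a weighted running average. These do not coincide, though your bound converts: since $\iota(k)\geqslant\iota(T)$ for $k\leqslant T$, the inequality $\sum_k\iota(k)g(k)\leqslant C$ with $g(k)\geqslant 0$ gives $\frac{1}{T}\sum_k g(k)\leqslant C/(T\iota(T))=CT^{-\epsilon}$, and passing from $f_i(\bar{x}(k),\lambda^*)$ to $f_i(x_i(k),\lambda_i(k))$ costs $L\mathbb{E}\|x_i(k)-\bar{x}(k)\|+K\mathbb{E}\|\lambda_i(k)-\lambda^*\|$ per term, whose uniform time average is $O(T^{\epsilon-1})=o(T^{-\epsilon})$. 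You should state this conversion explicitly to match the claimed statement.
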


\begin{proof}
By taking expectation to both sides of \eqref{D-4}, we obtain
\begin{align}\label{F-1}
    \mathbb{E}\big\|x_{i}(k+1)-x^*\big\|^{2} &\leqslant \mathbb{E}\big\|\xi_{i}(k)-x^{*} \big\|^{2}+\iota^{2}(k)\mathbb{E}\big\|d_{i}(k)\big\|^2\notag\\&-2\iota(k)\mathbb{E}\big[\big\langle d_{i}(k),\; \xi_{i}(k)-x^*\big\rangle\big].
\end{align}

By the double stochasticity of matrix $W(k)$ given in Assumption \ref{Ass2}(b), we obtain
\begin{align}
\sum_{i=1}^{n} \mathbb{E}\big\|\xi_{i}(k)-x^{*}\big\|^{2} &=   \sum_{i=1}^{n} \mathbb{E}\Big\|\sum_{j=1}^{n}w_{ij}(k)x_{j}(k)-x^{*}\Big\|^{2}\notag\\&\leqslant \sum_{i=1}^{n} \mathbb{E}\big\|x_{i}(k)-x^{*}\big\|^2,\label{F-2}\\\sum_{i=1}^{n} \mathbb{E}\big\|\xi_{i}(k)-\bar{x}(k)\big\|&= \sum_{i=1}^{n} \mathbb{E}\Big\|\sum_{j=1}^{n}w_{ij}(k)x_{j}(k)-\bar{x}(k)\Big\|\notag\\&\leqslant \sum_{i=1}^{n}\mathbb{E} \big\|x_{i}(k)-\bar{x}(k)\big\|.\label{F-3}
\end{align}
By \eqref{F-2}, \eqref{F-3} and Lemma \ref{Lem10}, we have
\begin{align}\label{F-4}
&\sum_{k=1}^{T}\sum_{i=1}^{n}\mathbb{E} \big\|x_{i}(k+1)-x^{*}\big\|^{2}\notag\\\leqslant &\sum_{k=1}^{T}\sum_{i=1}^{n}\mathbb{E} \big\|x_{i}(k)-x^{*}\big\|^{2}+4K\sum_{k=1}^{T}\sum_{i=1}^{n}\iota(s)\mathbb{E}\big\|\lambda_{i}(k)-\lambda^{*}\big\|
\notag\\+&2L\sum_{k=1}^{T}\sum_{i=1}^{n}\iota(k)\mathbb{E}\big\|x_{i}(k)-\bar{x}(k)\big\|+2nB\sum_{k=1}^{T}\iota(k)c(k)\notag\\+&4L\sum_{k=1}^{T}\sum_{i=1}^{n}\iota(k)c(k)\mathbb{E}\big\|\bigtriangleup_{i}(k)\big\|+\sum_{k=1}^{T}\sum_{i=1}^{n}\iota^{2}(k)\mathbb{E}\big\|d_{i}(k)\big\|^2
\notag\\-&2\sum_{k=1}^{T}\sum_{i=1}^{n}\iota(k)\mathbb{E}\Big[f_{i}\big(\bar{x}(k),\lambda^{*} \big)-f_{i}\big(x^*,\lambda^{*}\big)\Big].
\end{align}
Therefore, by taking summation of both sides of \eqref{F-1} for $k=1,2,\ldots T$ and $i=1,2,\ldots n$, we get
\begin{align}\label{F-5}
&\sum_{k=1}^{T}\sum_{i=1}^{n}\mathbb{E}\Big[f_{i}\big(x_{i}(k),\lambda_{i}(k) \big)-f_{i}\big(x^*,\lambda^{*}\big)\Big]
\notag\\\leqslant &\sum_{k=1}^{T}\frac{1}{\iota(k)}\bigg[ \sum_{i=1}^{n}\mathbb{E} \big\|x_{i}(k+1)-x^{*}\big\|^{2}- \sum_{i=1}^{n}\mathbb{E} \big\|x_{i}(k)-x^{*}\big\|^{2}\bigg]\notag\\+&3K\sum_{k=1}^{T}\sum_{i=1}^{n}\mathbb{E}\big\|\lambda_{i}(k)-\lambda^{*}\big\|
+2L\sum_{k=1}^{T}\sum_{i=1}^{n}\mathbb{E}\big\|x_{i}(k)-\bar{x}(k)\big\|\notag\\+&nB\sum_{k=1}^{T}c(k)+2L\sum_{k=1}^{T}\sum_{i=1}^{n}c(k)\mathbb{E}\big\|\bigtriangleup_{i}(k)\big\|\notag\\+&\frac{1}{2}\sum_{k=1}^{T}\sum_{i=1}^{n}\iota(k)\mathbb{E}\big\|d_{i}(k)\big\|^2
\end{align}
Note that $\iota(k)=\frac{1}{k^{1-\epsilon}}$ and $c(k)=\frac{1}{k^{\delta}}$, $0\leqslant\epsilon<\dfrac{1}{4}$, and $ \frac{1}{2}-\epsilon>\delta>\epsilon $. Since $X$ is bounded in $\mathcal{R}^{m}$, for $x\in X$, there exists a constant $M_{x}$ such that $\big\|x\big\|\leqslant M_{x}$. For the first term on the right hand side of \eqref{F-5}, we have
\begin{align}\label{F-6}
&\sum_{k=1}^{T}\frac{1}{\iota(k)}\bigg[ \sum_{i=1}^{n}\mathbb{E} \big\|x_{i}(k+1)-x^{*}\big\|^{2}- \sum_{i=1}^{n}\mathbb{E} \big\|x_{i}(k)-x^{*}\big\|^{2}\bigg]
\notag\\\leqslant&\dfrac{M_{1}}{\iota_{T}}.
\end{align}
By Lemmas \ref{Lem7} and \ref{Lem8}, for the second term and third term on the right hand side of \eqref{F-5}, we have
\begin{align}
2L\sum_{k=1}^{T}\sum_{i=1}^{n}\mathbb{E}\big\|x_{i}(s)-\bar{x}(s)\big\|&\leqslant M_{21}\sum_{k=1}^{T}\iota(s)\leqslant M_{21}T^{\epsilon},\label{F-7}\\
3K\sum_{k=1}^{T}\sum_{i=1}^{n}\mathbb{E}\big\|\lambda_{i}(s)-\lambda^{*}\big\|&\leqslant M_{22}\sum_{k=1}^{T}\iota(s)\leqslant M_{22}T^{\epsilon}.\label{F-8}
\end{align}
Clearly, for  the fourth term and fifth term on the right hand side of \eqref{F-5}, we have
\begin{align}\label{F-9}
nB\sum_{k=1}^{T}c(s)\leqslant M_{31}T^{1-\delta},
\end{align}
and
\begin{align}\label{F-10}
2L\sum_{k=1}^{T}\sum_{i=1}^{n}c(s)\mathbb{E}\big\|\bigtriangleup_{i}(s)\big\|\leqslant  M_{32}T^{1-\delta}.
\end{align}
For the last term on the right hand side of \eqref{F-5}, we have
\begin{align}\label{F-11}
\dfrac{1}{2}\sum_{k=1}^{T}\sum_{i=1}^{n}\iota(s)\mathbb{E}\big\|d_{i}(s)\big\|^2\leqslant M_{23}T^{\epsilon}.
\end{align}
Thus, the conclusion follows with $M_{31}+M_{32}=M_{3}$ and $M_{21}+M_{22}+M_{23}=M_{2}$.
Therefore,
 \begin{align}\label{F-12}
&\sum_{k=1}^{T}\sum_{i=1}^{n}\mathbb{E}\Big[f_{i}\big(x_{i}(k),\lambda_{i}(k)\big)-f_{i}\big(x^*,\lambda^{*}\big)\Big] \notag\\\leqslant &M_{1}T^{1-\epsilon}+M_{2}T^{\epsilon}+M_{3}T^{1-\delta},
\end{align}
where $M_{1}$, $M_{2}$ and $M_{3}$ are constants. Dividing both sides of \eqref{F-12} by $T$ gives
\begin{align}\label{F-13}
&\frac{1}{T}\sum_{k=1}^{T}\sum_{i=1}^{n}\mathbb{E}\Big[f_{i}\big(x_{i}(k),\lambda_{i}(k)\big)-f_{i}\big(x^*,\lambda^{*}\big)\Big]\notag\\ \sim &O\Big(\max\Big\{\frac{1}{T^{\epsilon}}, \frac{1}{T^{\delta}}, \frac{1}{T^{1-\epsilon}}\Big\}\Big).
\end{align}
The proof is completed.
\end{proof}

\begin{remark}
The convergence rate in Theorem \ref{The3} is also corresponding to the regret bound, defined as
\begin{align*}
R(T)=\frac{1}{T}\sum_{k=1}^{T}\sum_{i=1}^{n}\mathbb{E}\Big[f_{i}\big(x_{i}(k),\lambda_{i}(k)\big)-f_{i}\big(x^*,\lambda^{*}\big)\Big],
\end{align*}
(as given in online optimization \cite{hazan2016introduction}) for the following interval optimization problem:
\begin{align}
\min_{x_i=x_j\in X} \quad F(x,\lambda(k))=\sum_{i=1}^{n}f_{i}(x_i,\lambda_{i}(k)),\quad T>0.
\end{align}
Also, the established convergence rate $O(\frac{1}{T^{\epsilon}})$ is the best convergence rate for distributed
zeroth-order convex optimization. Note that this convergence rate is slower than that of distributed first-order methods \cite{ nedic2010constrained, ram2010distributed, zeng2017distributed} for the limitation of parameter choices and the prior function knowledge.
\end{remark}

\section{Simulation}\label{sec5}

In this section, we demonstrate simulations of the distributed stochastic zeroth-order algorithm for the following distributed interval-valued quadratic problem:
\begin{align*}
\min \quad  G(\bm{x})&=\sum_{i=1}^{5}[\upsilon_{1i},\upsilon_{2i}]\|x-\rho_{i}\|^{2}\\
s.\; t.\; \;\quad x&\in X,
\end{align*}
where $\upsilon_{1i}$, $\upsilon_{1i}\in \mathcal{R}$ and $\rho_{i}\in \mathcal{R}^{p}$. This problem is motivated from centralized quadratic interval-valued learning \cite{bhurjee2012efficient} and distributed optimization \cite{nedic2016stochastic}.

Take $X=\big\{x\big|\|x\|\leqslant 100\big\}$, $x\in \mathcal{R}$ with $[\upsilon_{1i},\upsilon_{2i}]=[0.5, 2]$. Take  $\rho_{1}=3$ , $\rho_{2}=2$, $\rho_{3}=1$, $\rho_{4}=0$, $\rho_{5}=-1$.   Then we consider parameters in the proposed algorithm by setting the step-size $\iota(k)=\frac{1}{k^{\frac{7}{8}}}$ and $c(k)=\frac{1}{k^{\frac{1}{4}}}$ used in randomized differences, along with $\lambda_{1}(0)=0.1$, $\lambda_{2}(0)=0.3$, $\lambda_{3}(0)=0.5$, $\lambda_{4}(0)=0.7$, $\lambda_{5}(0)=0.9$ and $x_{i}(0)$'s $=0$.

Then we investigate the convergence performance of the distributed stochastic zeroth-order  algorithm. Simulation results are based on a $5$-agent time-varying network, whose communication topology between agents can be described by Fig. 2. Also, Figs. \ref{fig3} and   \ref{fig4} show the convergence performance of the proposed algorithm. We can get a Pareto solution as $(0.500,0.996)$ for $500$ iterations.
\begin{figure}[!htbp]\label{fig2}
\centering
\begin{tikzpicture}[line cap=round,line join=round,>=triangle 45,x=0.8cm,y=0.8cm]
\clip(-3,-3) rectangle (2,2);
\fill[line width=0.8pt,color=ttttff,fill=ttttff,fill opacity=0.10] (-1.,-2.) -- (1.,-2.) -- (1.618,-0.097) -- (0.,1.08) -- (-1.618,-0.097) -- cycle;
\draw [line width=0.4pt,color=qqqqcc] (-1.,-2.)-- (1.,-2.);
\draw [line width=0.4pt,color=qqqqcc] (1.,-2.)-- (1.618,-0.097);
\draw [line width=0.4pt,color=qqqqcc] (1.618,-0.09)-- (0.,1.08);
\draw [line width=0.4pt,color=qqqqcc] (0.,1.08)-- (-1.618,-0.097);
\draw [line width=0.4pt,color=qqqqcc] (-1.618,-0.097)-- (-1.,-2.);
\draw [line width=0.4pt,color=qqqqcc] (-1.618,-0.097)-- (1.618,-0.097);
\draw [line width=0.4pt,color=qqqqcc] (0.,1.08)-- (1.,-2.);
\begin{scriptsize}
\draw [fill=xdxdff] (-1.,-2.) circle (4.5pt);
\draw[color=xdxdff] (-0.86,-1.47) node {5};
\draw [fill=xdxdff] (1.,-2.) circle (4.5pt);
\draw[color=xdxdff] (1.14,-1.47) node {4};
\draw [fill=xdxdff] (1.618,-0.097) circle (4.5pt);
\draw[color=xdxdff] (0.14, -1) node {(a)};
\draw[color=xdxdff] (1.76,0.43) node {3};
\draw [fill=xdxdff] (0.,1.08) circle (4.5pt);
\draw[color=xdxdff] (0.14,1.61) node {2};
\draw [fill=xdxdff] (-1.618,-0.097) circle (4.5pt);
\draw[color=xdxdff] (-1.48,0.43) node {1};
\end{scriptsize}
\end{tikzpicture}
\begin{tikzpicture}[line cap=round,line join=round,>=triangle 45,x=0.8cm,y=0.8cm]
\clip(-3,-3) rectangle (2,2);
\fill[line width=0.8pt,color=ttttff,fill=ttttff,fill opacity=0.10] (1.618,-0.097) -- (0.,1.08) -- (-1.618,-0.097) -- cycle;
\draw [line width=0.4pt,color=qqqqcc] (1.618,-0.09)-- (0.,1.08);
\draw [line width=0.4pt,color=qqqqcc] (0.,1.08)-- (-1.618,-0.097);
\draw [line width=0.4pt,color=qqqqcc] (-1.618,-0.097)-- (1.618,-0.097);
\begin{scriptsize}
\draw [fill=xdxdff] (-1.,-2.) circle (4.5pt);
\draw[color=xdxdff] (-0.86,-1.47) node {5};
\draw [fill=xdxdff] (1.,-2.) circle (4.5pt);
\draw[color=xdxdff] (1.14,-1.47) node {4};
\draw [fill=xdxdff] (1.618,-0.097) circle (4.5pt);
\draw[color=xdxdff] (1.76,0.43) node {3};
\draw[color=xdxdff] (0.14, -1) node {(b)};
\draw [fill=xdxdff] (0.,1.08) circle (4.5pt);
\draw[color=xdxdff] (0.14,1.61) node {2};
\draw [fill=xdxdff] (-1.618,-0.097) circle (4.5pt);
\draw[color=xdxdff] (-1.48,0.43) node {1};
\end{scriptsize}
\end{tikzpicture}

\begin{tikzpicture}[line cap=round,line join=round,>=triangle 45,x=0.8cm,y=0.8cm]
\clip(-3,-3) rectangle (2,2);
\fill[line width=0.8pt,color=ttttff,fill=ttttff,fill opacity=0.10](1.,-2.) -- (1.618,-0.097) -- (0.,1.08)-- cycle;
\draw [line width=0.4pt,color=qqqqcc] (1.,-2.)-- (1.618,-0.097);
\draw [line width=0.4pt,color=qqqqcc] (1.618,-0.09)-- (0.,1.08);
\draw [line width=0.4pt,color=qqqqcc] (0.,1.08)-- (1.,-2.);
\begin{scriptsize}
\draw [fill=xdxdff] (-1.,-2.) circle (4.5pt);
\draw[color=xdxdff] (-0.86,-1.47) node {5};
\draw [fill=xdxdff] (1.,-2.) circle (4.5pt);
\draw[color=xdxdff] (1.14,-1.47) node {4};
\draw [fill=xdxdff] (1.618,-0.097) circle (4.5pt);
\draw[color=xdxdff] (1.76,0.43) node {3};
\draw[color=xdxdff] (0.14, -1) node {(c)};
\draw [fill=xdxdff] (0.,1.08) circle (4.5pt);
\draw[color=xdxdff] (0.14,1.61) node {2};
\draw [fill=xdxdff] (-1.618,-0.097) circle (4.5pt);
\draw[color=xdxdff] (-1.48,0.43) node {1};
\end{scriptsize}
\end{tikzpicture}
\begin{tikzpicture}[line cap=round,line join=round,>=triangle 45,x=0.8cm,y=0.8cm]
\clip(-3,-3) rectangle (2,2);
\fill[line width=0.8pt,color=ttttff,fill=ttttff,fill opacity=0.10] (-1.,-2.) -- (1.,-2.) --(-1.618,-0.097) ;
\draw [line width=0.4pt,color=qqqqcc] (-1.,-2.)-- (1.,-2.);
\draw [line width=0.4pt,color=qqqqcc] (-1.618,-0.097)-- (-1.,-2.);
\begin{scriptsize}
\draw [fill=xdxdff] (-1.,-2.) circle (4.5pt);
\draw[color=xdxdff] (-0.86,-1.47) node {5};
\draw [fill=xdxdff] (1.,-2.) circle (4.5pt);
\draw[color=xdxdff] (1.14,-1.47) node {4};
\draw [fill=xdxdff] (1.618,-0.097) circle (4.5pt);
\draw[color=xdxdff] (1.76,0.43) node {3};
\draw[color=xdxdff] (0.14, -1) node {(d)};
\draw [fill=xdxdff] (0.,1.08) circle (4.5pt);
\draw[color=xdxdff] (0.14,1.61) node {2};
\draw [fill=xdxdff] (-1.618,-0.097) circle (4.5pt);
\draw[color=xdxdff] (-1.48,0.43) node {1};
\end{scriptsize}
\end{tikzpicture}
\caption{Topology of the $5$-agent network. }
\end{figure}
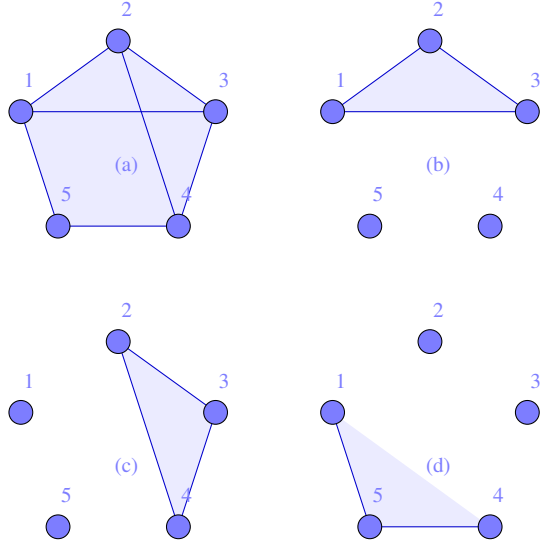

\begin{figure}[ht]
	\centering
	\includegraphics[width=\linewidth]{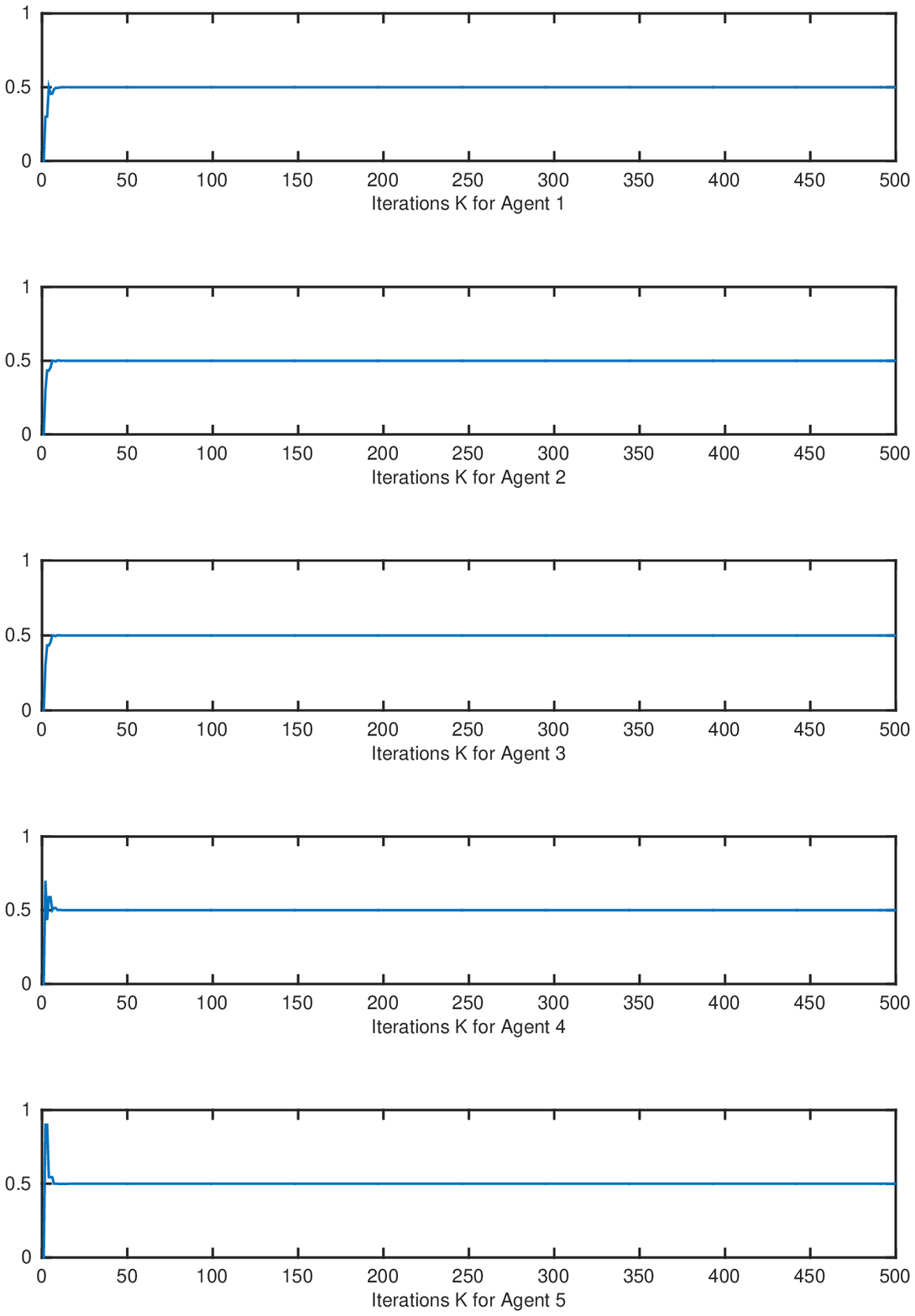}
	\caption{$\lambda_{i}(k)$ for agent $i$}
	\label{fig3}
\end{figure}
\begin{figure}[ht]
	\centering
	\includegraphics[width=\linewidth]{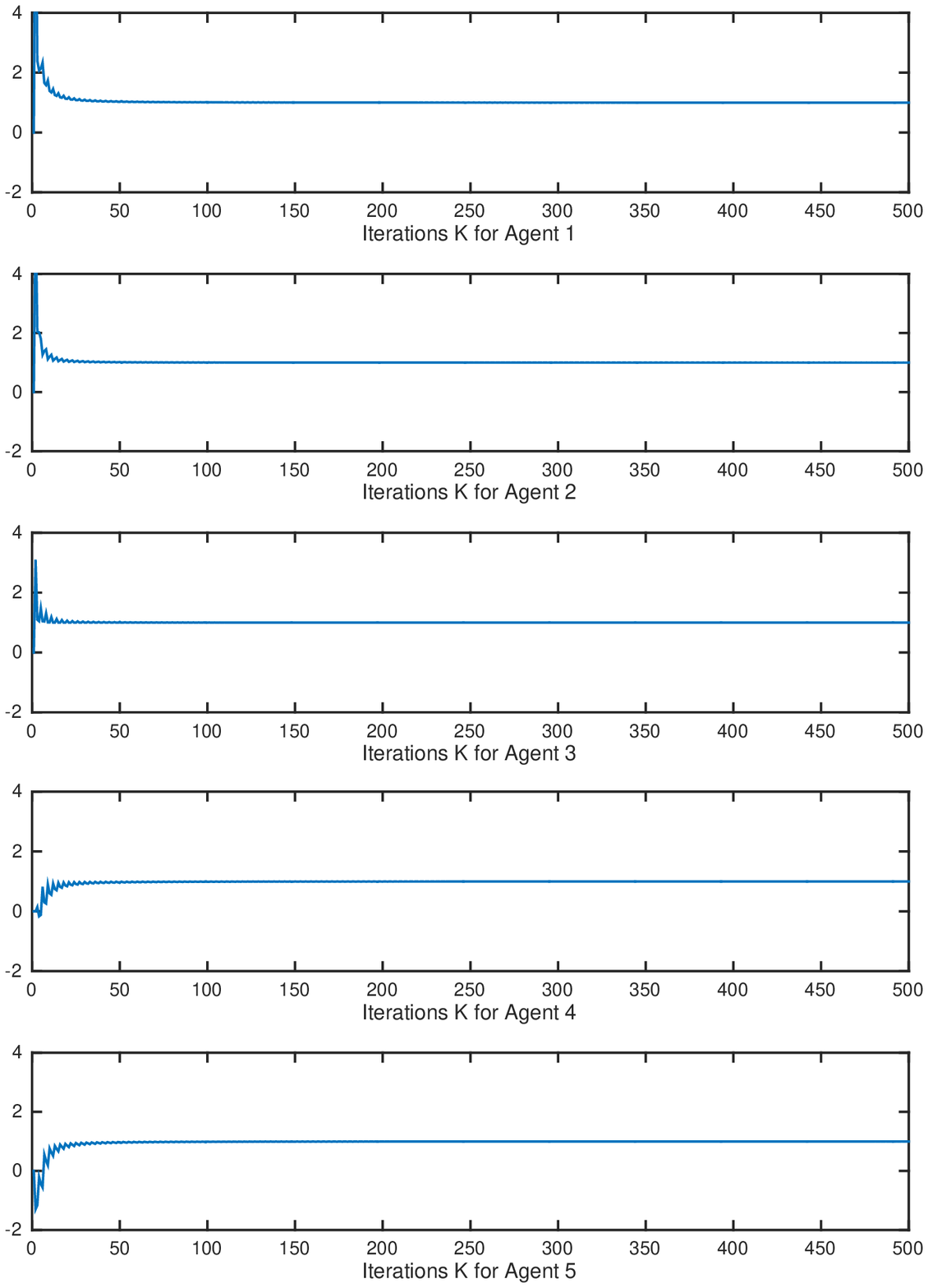}
	\caption{$x_{i}(k)$ for agent $i$}
	\label{fig4}
\end{figure}

\section{Conclusion}\label{sec6}

This paper investigated the distributed interval optimization problem, subject to local convex constraints. The objective functions are compact, interval-valued functions and the network for the distributed design is time-varying. Based on randomization technique, a distributed zeroth-order methodology was developed to find a Pareto optimal solution of distributed interval optimization problem. Moreover, we proved the convergence to a Pareto optimal solution with probability one over time-varying network, and finally gave a numerical example to illustrate the effectiveness of the proposed algorithm.

\appendices
\section*{Acknowledgment}
This work was supported by National Key Research and Development Program of
China (2016YF-B0901900) and NSFC (61733018, 61573344, 61573345, 61603378).

\section{Proof of Lemma \ref{Lem8}}
\noindent (a). Suppose that there is a vector $x$ such that we can choose a subgradient $\triangledown f_{i_{x}}(x,\lambda)\in \partial f_{i_{x}}(x,\lambda)$ with $\|\triangledown f_{i_{x}}(x,\lambda)\|>L$. Suppose $y=x+\triangledown f_{i_{x}}(x,\lambda)$.  Recalling Definition \ref{Def1} gives
 \begin{align*}
 f_{i}(y,\lambda)-f_{i}(x,\lambda)&\geqslant\langle \triangledown f_{i_{x}}(x,\lambda), y-x \rangle\\&\geqslant \big\|\triangledown f_{i_{x}}(x,\lambda)\big\|^{2}>L\big\|\triangledown f_{i_{x}}(x,\lambda)\big\|\\&>L\big\|y-x\big\|,
 \end{align*}
 which contradicts the Lipschitz continuity of $f_{i}\big (x,\lambda\big)$ with respect to $x$. By an analogous proof, $\|\partial f_{i_{\lambda}}(x,\lambda)\|\leqslant K$.

\noindent (b). For $d_{i}(k)$ in \eqref{3-6},
\begin{align}\label{A-1}
d_{i}(k)=\dfrac{\big[y_{i}^{+}(k)-y_{i}^{-}(k)\big]\bigtriangleup_{i}^{-}(k)}{2c(k)}
\end{align}
where $\|y_{i}^{+}(k)-y_{i}^{-}(k)\|=\big\|f_{i}\big(\xi_{i}(k)+c(k)\bigtriangleup_{i}(k),\lambda_{i}(k)\big)-f_{i}\big(\xi_{i}(k)-c(k)\bigtriangleup_{i}(k), \lambda_{i}(k)\big\|\leqslant 2Lc(k)\big\|\bigtriangleup_{i}(k)\big\|$ by Lemma \ref{Lem13}.
Due to Condition \ref{Con1}(a), we have
\begin{align}\label{A-2}
\mathbb{E}\Bigg\|\dfrac{\big[y_{i}^{+}(k)-y_{i}^{-}(k)\big]\bigtriangleup_{i}^{-}(k)}{2c(k)}\Bigg\|\leqslant nM_{1}M_{2}L,
\end{align}
and
\begin{align}\label{A-3}
\mathbb{E}\Bigg\|\dfrac{\big[y_{i}^{+}(k)-y_{i}^{-}(k)\big]\bigtriangleup_{i}^{-}(k)}{2c(k)}\Bigg\|^{2}\leqslant (nM_{1}M_{2}L)^{2}.
\end{align}

\section{Proof of Lemma \ref{Lem9}}
Define, for $i\in \mathcal{N}$ and $k\geqslant 0$
$$
p_{i}(k+1)=x_{i}(k+1)-\xi_{i}(k)=x_{i}(k+1)-\sum_{j=1}^{n}w_{ij}(k)x_{j}(k)
$$
as the error between $x_{i}(k+1)$ and $\xi_{i}(k)$.   From Lemma \ref{Lem5}(b) and the fact that $X$ is a closed convex set, we get
 \begin{align}\label{B-2}
 & \big  \|p_{i}(k+1)\big\|\notag\\=&\bigg\|P_{X}\big(\sum_{j=1}^{n}w_{ij}(k)x_{j}(k)-\iota(k)d_{i}(k)\big)-\sum_{j=1}^{n}w_{ij}(k)x_{j}(k)\bigg\|\notag\\\leqslant &\iota(k)\big\|d_{i}(k)\big\|.
    \end{align}
Rewrite \eqref{3-9} compactly in terms of  $\Psi(k,s)$ and the definition of $p_{i}(k+1)$ as follows:
\begin{align}\label{B-3}
x_{i}(k+1)&=\sum_{j=1}^{n} \big[\Psi(k,0)\big]_{ij}x_{j}(0)+p_{i}(k+1)\notag\\&+\sum_{s=1}^{k}\sum_{j=1}^{n} \big[\Psi(k,s)\big]_{ij}p_{j}(s),
\end{align}
for $k\geqslant s$. Define $\bar{x}(k+1)=\frac{1}{n}\sum_{i=1}^{n}x_{i}(k+1)$. Moreover, with Assumption \ref{Ass1}(b), the following can be obtained similarly:
\begin{align}\label{B-4}
\bar{x}(k+1)=\frac{1}{n}\sum_{i=1}^{n}x_{i}(0)+\frac{1}{n}\sum_{s=1}^{k+1}\sum_{j=1}^{n}p_{j}(s)
\end{align}
Therefore, $\forall i\in \mathcal{N}$,
    \begin{align}\label{B-5}
\big\|x_{i}(k+1)-\bar{x}(k+1)\big\|&\leqslant\sum_{j=1}^{n}\Big |\big [\Psi(k,0)\big]_{ij}-\frac{1}{n}\Big|\big\|x_{j}(0)\big\|\notag\\&+\big\|p_{i}(k+1)\big\|
+\frac{1}{n}\sum_{j=1}^{n}\big\|p_{j}(k+1)\big\|\notag\\&+\sum_{s=1}^{k}\sum_{j=1}^{n} \Big| \big[\Psi(k,s)\big]_{ij}-\frac{1}{n}\Big|\big\|p_{j}(s)\big\|.
    \end{align}
Taking the expectation of \eqref{B-5} yields
 \begin{align}\label{B-6}
&\mathbb{E} \big\|x_{i}(k+1)-\bar{x}(k+1)\big\|
\notag\\\leqslant &\sum_{j=1}^{n}\Big |\big [\Psi(k,0)\big]_{ij}-\frac{1}{n}\Big|\big\|x_{j}(0)\big\|+\mathbb{E}\big\|p_{i}(k+1)\big\|
\notag\\+&\frac{1}{n}\sum_{j=1}^{n}\mathbb{E}\big\|p_{j}(k+1)\big\|+\sum_{s=1}^{k}\sum_{j=1}^{n} \Big| \big[\Psi(k,s)\big]_{ij}-\frac{1}{n}\Big|\mathbb{E}\big\|p_{j}(s)\big\|.
 \end{align}
Plugging in the estimate of $\Psi(k,s)$ in Lemma \ref{Lem7} and the estimate of $p_{i}(k+1)$ in \eqref{B-2}, we have
    \begin{align}\label{B-7}
&\mathbb{E}\big\|x_{i}(k+1)-\bar{x}(k+1)\big\|\notag\\\leqslant &n\delta\beta^{k}\max_{1\leqslant i\leqslant
n}\big\|x_{i}(0)\big\|+\iota(k)\mathbb{E}\big\|d_{i}(k)\big\|+\frac{\iota(k)}{n}\sum_{i=1}^{n}\mathbb{E}\big\|d_{i}(k)\big\|
\notag\\+&\delta\sum_{s=1}^{k}\beta^{k-s}\sum_{i=1}^{n}\iota(s-1)\mathbb{E}\big\|d_{i}(s-1)\big\|.
    \end{align}
From Lemma \ref{Lem8}, $\mathbb{E}\big\|d_{i}(k)\big\|\leqslant L$.  Therefore,
\begin{align}\label{B-8}
\mathbb{E}\big\|x_{i}(k+1)-\bar{x}(k+1)\big\| &\leqslant n\delta\beta^{k}\max_{1\leqslant i\leqslant
n}\big\|x_{i}(0)\big\|\notag\\&\hspace{-2cm}+2\iota(k)nM_{1}M_{2}L\notag\\&\hspace{-2cm}+\delta n^{2}M_{1}M_{2}\sum_{s=1}^{k}\iota(s-1)\beta^{k-s}L.
\end{align}
Since $\sum_{k=1}^{\infty}\iota(k)^{2}<\infty$ with Remark \ref{Rem4}(a) and $\sum_{k=1}^{\infty}\frac{\iota(k)}{c(k)}<\infty$ with  Remark \ref{Rem4}(c), we obtain $\lim_{k\rightarrow\infty}\iota(k)=0$ and $\lim_{k\rightarrow\infty}\frac{\iota(k)}{c(k)}=0$. According to Lemma 3.1 in \cite{ram2010distributed},
$$\lim_{k\rightarrow\infty}\sum_{s=1}^{k}\iota(s-1)\beta^{k-s}=0,\, \lim_{k\rightarrow\infty}\sum_{s=1}^{k}\frac{\iota(s-1)}{c(s-1)}\beta^{k-s}=0.$$
Thus, the conclusion follows.

\section{Proof of Lemma \ref{Lem10}}

\noindent
(a). Define
\begin{align*}
[C_{i}(k)]_{1}=\xi_{i}(k)+c(k)\bigtriangleup_{i}(k)\notag,\\
[C_{i}(k)]_{2}=\xi_{i}(k)-c(k)\bigtriangleup_{i}(k)\notag.\\
\end{align*}
According to Lemma \ref{Lem1},
\begin{align}\label{C-2}
&f_{i}\big([C_{i}(k)]_{1},\lambda_{i}(k)\big)-f_{i}\big([C_{i}(k)]_{2},\lambda_{i}(k)\big)\notag\\\in &\big\langle \partial f_{i_{\xi_{i}(k)+\theta_{i}c(k)\bigtriangleup_{i}(k)}}\big(\xi_{i}(k)+\theta_{i}c(k)\bigtriangleup_{i}(k),\lambda_{i}(k)\big),\notag\\  &2c(k)\bigtriangleup_{i}(k)\big\rangle,
\end{align}
where $\theta_{i}\in[-1,1]$ is a constant. Therefore, there exists $\varsigma_{i} \in   \partial f_{i_{\xi_{i}(k)+\theta_{i}c(k)\bigtriangleup_{i}(k)}}\big(\xi_{i}(k)+\theta_{i}c(k)\bigtriangleup_{i}(k),\lambda_{i}(k)\big)$ such that
$$
f_{i}\big([C_{i}(k)]_{1},\lambda_{i}(k)\big)-f_{i}\big([C_{i}(k)]_{2},\lambda_{i}(k)\big)=\big\langle \varsigma_{i}, \; 2c(k)\bigtriangleup_{i}(k)\big\rangle.
$$
By taking the conditional expectation of $\big\langle d_{i}(k),\xi_{i}(k)-x^{*}\big\rangle$ with respect to $F(k)$, we obtain
\begin{align}\label{C-4}
\mathbb{E}\big[\big\langle d_{i}(k), \xi_{i}(k)-x^*\big\rangle \big | F(k)\big]= D_{i}(k),
\end{align}
with
$
D_{i}(k)=\mathbb{E}\Big[ (\varsigma_{i})^{\top}\bigtriangleup_{i}(k)\big[\bigtriangleup_{i}(k)\big]^{-\top}(\xi_{i}(k)-x^{*})\big | F(k)\Big]
$, which can be further formulated as:
\begin{align}\label{C-5}
D_{i}(k)&=\mathbb{E}\Big[ (\varsigma_{i})^{\top}\Big(\bigtriangleup_{i}(k)\big[\bigtriangleup_{i}(k)\big]^{-\top}-I\Big)(\xi_{i}(k)\notag\\&-x^{*})\big | F(k)\Big]+\mathbb{E}\Big[\big\langle \varsigma_{i}, \;\xi_{i}(k)-x^{*}\big\rangle\big | F(k)\Big].
\end{align}
By Definition \ref{Def1} and Lemma \ref{Lem13}, we obtain
\begin{align}\label{C-6}
&\mathbb{E}[\big\langle \varsigma_{i},\;\xi_{i}(k)-x^{*}\big\rangle|F(k)]
\notag\\=&\mathbb{E}[\big\langle \varsigma_{i},\;\xi_{i}(k)+\theta_{i}c(k)\bigtriangleup_{i}(k)-\theta_{i}c(k)\bigtriangleup_{i}(k)-x^*\big\rangle|F(k)]
\notag\\
\geqslant & \mathbb{E}[f_{i}\big(\xi_{i}(k)+\theta_{i}c(k)\bigtriangleup(k)_{i},\lambda_{i}(k)\big)-f_{i}\big(x^*,\lambda_{i}(k)\big)|F(k)]
\notag\\-&
\big |c(k) \big | L\mathbb{E}\big\|\theta_{i}\bigtriangleup_{i}(k)\big\|
 \notag\\\geqslant &
 \mathbb{E}[f_{i}\big(\xi_{i}(k)+\theta_{i}c(k)\bigtriangleup_{i}(k),\lambda_{i}(k)\big)-f_{i}\big(\bar{x}(k),\lambda_{i}(k)\big)|F(k)]
 \notag\\+&
f_{i}\big(\bar{x}(k),\lambda_{i}(k)\big)-f_{i}\big(x^*,\lambda_{i}(k)\big)-\big |c(k)\big | L\mathbb{E}\big\|\theta_{i} \bigtriangleup_{i}(k)\big\|
\notag\\\geqslant & f_{i}\big(\bar{x}(k),\bar{\lambda}(k)\big)-f_{i}\big(x^*,\lambda^{*}\big)+
f_{i}\big(\bar{x}(k),\lambda_{i}(k)\big)
\notag\\-& f_{i}\big(\bar{x}(k),\bar{\lambda}(k)\big)-f_{i}\big(x^*,\lambda_{i}(k)\big)+f_{i}\big(x^*,\lambda^{*}\big)\notag\\-&L\big\|\xi_{i}(k)-\bar{x}(k)\big\|-2\big|c(k)\big |L\mathbb{E}\big\|\theta_{i}\bigtriangleup_{i}(k)\big\|
\notag\\\geqslant&
 f_{i}\big(\bar{x}(k),\bar{\lambda}(k)\big)-f_{i}\big(x^*,\lambda^{*}\big)-L\big\|\xi_{i}(k)-\bar{x}(k)\big\|\notag\\-&K\big\|\lambda_{i}(k)-\bar{\lambda}(k)\big\|-K\|\lambda_{i}(k)-\lambda^{*}\big\|-2c(k)L\mathbb{E}\big\|\bigtriangleup_{i}(k)\big\|,
\end{align}
and
\begin{align}
&\bigg|\mathbb{E}\Big[ (\varsigma_{i})^{\top}\Big(\bigtriangleup_{i}(k)\big[\bigtriangleup_{i}(k)\big]^{-\top}-I\Big)(\xi_{i}(k)-x^{*})\big | F(k)\Big]\bigg |\notag\\=&\bigg |\mathbb{E}\Big[ (\varsigma_{i}-\!\!\partial f_{i_{x_{i}(k)}}\big(x_{i}(k),\lambda_{i}(k)\big))^{\top}\Big(\bigtriangleup_{i}(k)\big[\bigtriangleup_{i}(k)\big]^{-\top}\notag\\-&I\Big)(\xi_{i}(k)- x^{*})\big | F(k)\Big]\bigg |\leqslant Bc(k).\label{C-7}
\end{align}
for a positive constant $B$.
Combining \eqref{C-6}, \eqref{B-7} with  \eqref{C-4} gives
\begin{align}\label{C-8}
&\mathbb{E}\big[\big\langle d_{i}(k),\;x_{i}(k)-\xi^*\big\rangle \big|F(k)\big] \notag\\ \geqslant  &
f_{i}\big(\bar{x}(k),\bar{\lambda}(k)\big)-f_{i}\big(x^*,\lambda^{*}\big)-L\big\|\xi_{i}(k)-\bar{x}(k)\big\|-Bc(k)\notag\\-&K\big\|\lambda_{i}(k)-\bar{\lambda}(k)\big\|-K\|\lambda_{i}(k)-\lambda^{*}\big\|-c(k)L\big\|\bigtriangleup_{i}(k)\big\|.
\end{align}

\noindent (b). Similar to the proof of part (a), we get
\begin{align}\label{C-10}
&\mathbb{E}\big[\big\langle d_{i}(k),\;x_{i}(k)-\xi^*\big\rangle \big|F(k)\big] \notag\\ \geqslant  &
f_{i}\big(\bar{x}(k),\lambda^{*}\big)-f_{i}\big(x^*,\lambda^{*}\big)-L\big\|\xi_{i}(k)-\bar{x}(k)\big\|-Bc(k)\notag\\-&2K\|\lambda_{i}(k)-\lambda^{*}\big\|-c(k)L\big\|\bigtriangleup_{i}(k)\big\|.
\end{align}
The proof of the second part of Lemma \ref{Lem10} is completed by taking  the expectation to both sides of \eqref{C-10}.

\ifCLASSOPTIONcaptionsoff
  \newpage
\fi

\begin{thebibliography}{10}
\providecommand{\url}[1]{#1}
\csname url@samestyle\endcsname
\providecommand{\newblock}{\relax}
\providecommand{\bibinfo}[2]{#2}
\providecommand{\BIBentrySTDinterwordspacing}{\spaceskip=0pt\relax}
\providecommand{\BIBentryALTinterwordstretchfactor}{4}
\providecommand{\BIBentryALTinterwordspacing}{\spaceskip=\fontdimen2\font plus
\BIBentryALTinterwordstretchfactor\fontdimen3\font minus
  \fontdimen4\font\relax}
\providecommand{\BIBforeignlanguage}[2]{{%
\expandafter\ifx\csname l@#1\endcsname\relax
\typeout{** WARNING: IEEEtran.bst: No hyphenation pattern has been}%
\typeout{** loaded for the language `#1'. Using the pattern for}%
\typeout{** the default language instead.}%
\else
\language=\csname l@#1\endcsname
\fi
#2}}
\providecommand{\BIBdecl}{\relax}
\BIBdecl

\bibitem{nedic2009distributed}
A.~Nedic and A.~Ozdaglar, ``Distributed subgradient methods for multi-agent
  optimization,'' \emph{IEEE Transactions on Automatic Control}, vol.~54,
  no.~1, pp. 48--61, 2009.

\bibitem{yi2015distributed}
P.~Yi, Y.~Hong, and F.~Liu, ``Distributed gradient algorithm for constrained
  optimization with application to load sharing in power systems,''
  \emph{Systems \& Control Letters}, vol.~83, pp. 45--52, 2015.

\bibitem{nedic2010constrained}
A.~Nedic, A.~Ozdaglar, and P.~A. Parrilo, ``Constrained consensus and
  optimization in multi-agent networks,'' \emph{IEEE Transactions on Automatic
  Control}, vol.~55, no.~4, pp. 922--938, 2010.

\bibitem{ram2010distributed}
S.~S. Ram, A.~Nedi{\'c}, and V.~V. Veeravalli, ``Distributed stochastic
  subgradient projection algorithms for convex optimization,'' \emph{Journal of
  Optimization Theory and Applications}, vol. 147, no.~3, pp. 516--545, 2010.

\bibitem{zhang2015distributed}
Y.~Zhang, Y.~Lou, Y.~Hong, and L.~Xie, ``Distributed projection-based
  algorithms for source localization in wireless sensor networks,'' \emph{IEEE
  Transactions on Wireless Communications}, vol.~14, no.~6, pp. 3131--3142,
  2015.

\bibitem{cherukuri2015distributed}
A.~Cherukuri and J.~Cort{\'e}s, ``Distributed generator coordination for
  initialization and anytime optimization in economic dispatch,'' \emph{IEEE
  Transactions on Control of Network Systems}, vol.~2, no.~3, pp. 226--237,
  2015.

\bibitem{yuan2016regularized}
D.~Yuan, D.~W. Ho, and S.~Xu, ``Regularized primal-dual subgradient method for
  distributed constrained optimization.'' \emph{IEEE Trans. Cybernetics},
  vol.~46, no.~9, pp. 2109--2118, 2016.

\bibitem{yuan2017adaptive}
D.~Yuan, D.~W. Ho, and G.-P. Jiang, ``An adaptive primal-dual subgradient
  algorithm for online distributed constrained optimization,'' \emph{IEEE
  Transactions on Cybernetics}, 2017.

\bibitem{chen1999kiefer}
H.-F. Chen, T.~E. Duncan, and B.~Pasik-Duncan, ``A {K}iefer-{W}olfowitz
  algorithm with randomized differences,'' \emph{IEEE Transactions on Automatic
  Control}, vol.~44, no.~3, pp. 442--453, 1999.

\bibitem{conn2009introduction}
A.~R. Conn, K.~Scheinberg, and L.~N. Vicente, \emph{Introduction to
  derivative-free optimization}.\hskip 1em plus 0.5em minus 0.4em\relax SIAM,
  2009, vol.~8.

\bibitem{duchi2013optimal}
J.~C. Duchi, M.~I. Jordan, M.~J. Wainwright, and A.~Wibisono, ``Optimal rates
  for zero-order convex optimization: The power of two function evaluations,''
  \emph{IEEE Transactions on Information Theory}, vol.~61, no.~5, pp.
  2788--2806, 2013.

\bibitem{nesterov2011random}
Y.~Nesterov and V.~Spokoiny, ``Random gradient-free minimization of convex
  functions,'' Universit{\'e} catholique de Louvain, Center for Operations
  Research and Econometrics (CORE), Tech. Rep., 2011.

\bibitem{yuan2015randomized}
D.~Yuan and D.~W. Ho, ``Randomized gradient-free method for multiagent
  optimization over time-varying networks,'' \emph{IEEE Transactions on neural
  networks and learning systems}, vol.~26, no.~6, pp. 1342--1347, 2015.

\bibitem{wang2018distributed}
Y.~Wang, P.~Lin, and Y.~Hong, ``Distributed regression estimation with
  incomplete data in multi-agent networks,'' \emph{Science China Information
  Sciences}, vol.~61, no.~9, p. 092202, 2018.

\bibitem{wang2017distributed}
Y.~Wang, P.~Lin, and H.~Qin, ``Distributed classification learning based on
  nonlinear vector support machines for switching networks,''
  \emph{Kybernetika}, vol.~53, no.~4, pp. 595--611, 2017.

\bibitem{li2017distributed}
H.~Li, C.~Huang, G.~Chen, X.~Liao, and T.~Huang, ``Distributed consensus
  optimization in multiagent networks with time-varying directed topologies and
  quantized communication,'' \emph{IEEE Transactions on Cybernetics}, vol.~47,
  no.~8, pp. 2044--2056, 2017.

\bibitem{wu2012comparison}
L.~Wu, M.~Shahidehpour, and Z.~Li, ``Comparison of scenario-based and interval
  optimization approaches to stochastic scuc,'' \emph{IEEE Transactions on
  Power Systems}, vol.~27, no.~2, pp. 913--921, 2012.

\bibitem{neumaier1990interval}
A.~Neumaier, \emph{Interval methods for systems of equations}.\hskip 1em plus
  0.5em minus 0.4em\relax Cambridge university press, 1990, vol.~37.

\bibitem{rohn1994positive}
J.~Rohn, ``Positive definiteness and stability of interval matrices,''
  \emph{SIAM Journal on Matrix Analysis and Applications}, vol.~15, no.~1, pp.
  175--184, 1994.

\bibitem{levin1999nonlinear}
V.~Levin, ``Nonlinear optimization under interval uncertainty,''
  \emph{Cybernetics and Systems Analysis}, vol.~35, no.~2, pp. 297--306, 1999.

\bibitem{hu2006novela}
B.~Q. Hu and S.~Wang, ``A novel approach in uncertain programming part i: New
  arithmetic and order relation for interval numbers,'' \emph{Journal of
  Industrial \& Management Optimization}, vol.~2, no.~4, pp. 351--371, 2006.

\bibitem{hisao1990multiobjective}
I.~Hisao and T.~Hideo, ``Multiobjective programming in optimization of the
  interval objective function,'' \emph{European Journal of Operational
  Research}, vol.~48, no.~2, pp. 219--225, 1990.

\bibitem{wu2008interval}
H.-C. Wu, ``On interval-valued nonlinear programming problems,'' \emph{Journal
  of Mathematical Analysis and Applications}, vol. 338, no.~1, pp. 299--316,
  2008.

\bibitem{liu2007numerical}
S.-T. Liu and R.-T. Wang, ``A numerical solution method to interval quadratic
  programming,'' \emph{Applied mathematics and computation}, vol. 189, no.~2,
  pp. 1274--1281, 2007.

\bibitem{jiang2008nonlinear}
C.~Jiang, X.~Han, G.~Liu, and G.~Liu, ``A nonlinear interval number programming
  method for uncertain optimization problems,'' \emph{European Journal of
  Operational Research}, vol. 188, no.~1, pp. 1--13, 2008.

\bibitem{jayswal2011sufficiency}
A.~Jayswal, I.~Stancu-Minasian, and I.~Ahmad, ``On sufficiency and duality for
  a class of interval-valued programming problems,'' \emph{Applied Mathematics
  and Computation}, vol. 218, no.~8, pp. 4119--4127, 2011.

\bibitem{hladik2012interval}
M.~Hlad{\i}k, ``Interval linear programming: A survey,'' \emph{Linear
  Programming-New Frontiers in Theory and Applications}, pp. 85--120, 2012.

\bibitem{bhurjee2012efficient}
A.~K. Bhurjee and G.~Panda, ``Efficient solution of interval optimization
  problem,'' \emph{Mathematical Methods of Operations Research}, vol.~76,
  no.~3, pp. 273--288, 2012.

\bibitem{bellet2015distributed}
A.~Bellet, Y.~Liang, A.~B. Garakani, M.-F. Balcan, and F.~Sha, ``A distributed
  {F}rank-{W}olfe algorithm for communication-efficient sparse learning,'' in
  \emph{Proceedings of the 2015 SIAM International Conference on Data
  Mining}.\hskip 1em plus 0.5em minus 0.4em\relax SIAM, 2015, pp. 478--486.

\bibitem{anit2018distributed}
K.~S. Anit, J.~Dusan, B.~Dragana, and K.~Soummya, ``Distributed zeroth order
  optimization over random networks: A {K}iefer-{W}olfowitz stochastic
  approximation approache,'' \emph{arXiv:1803.07836}, 2018.

\bibitem{hajinezhad2017zeroth}
D.~Hajinezhad, M.~Hong, and A.~Garcia, ``Zeroth order nonconvex multi-agent
  optimization over networks,'' \emph{arXiv:1710.09997}, 2017.

\bibitem{yuan2015zeroth}
D.~Yuan, D.~W. Ho, and S.~Xu, ``Zeroth-order method for distributed
  optimization with approximate projections.'' \emph{IEEE Transactions on
  Neural Networks and Learning Systems}, vol.~27, no.~2, pp. 284--294, 2015.

\bibitem{hiriart2012fundamentals}
J.-B. Hiriart-Urruty and C.~Lemar{\'e}chal, \emph{Fundamentals of Convex
  Analysis}.\hskip 1em plus 0.5em minus 0.4em\relax Springer Science \&
  Business Media, 2012.

\bibitem{clarke1998nonsmooth}
F.~H. Clarke, Y.~S. Ledyaev, and R.~J. Stern, \emph{Nonsmooth Analysis and
  Control Theory}.\hskip 1em plus 0.5em minus 0.4em\relax Springer Science \&
  Business Media, 2008.

\bibitem{durrett2010probability}
R.~Durrett, \emph{Probability: Theory and Examples}.\hskip 1em plus 0.5em minus
  0.4em\relax Cambridge university press, 2010.

\bibitem{polyak1987introduction}
B.~T. Polyak, \emph{Introduction to Optimization}.\hskip 1em plus 0.5em minus
  0.4em\relax Chapman and Hall, 1987.

\bibitem{aubin2012differential}
J.-P. Aubin and A.~Cellina, \emph{Differential inclusions: set-valued maps and
  viability theory}.\hskip 1em plus 0.5em minus 0.4em\relax Springer Science \&
  Business Media, 2012, vol. 264.

\bibitem{maeda2012optimization}
T.~Maeda, ``On optimization problems with set-valued objective maps: existence
  and optimality,'' \emph{Journal of Optimization Theory and Applications},
  vol. 153, no.~2, pp. 263--279, 2012.

\bibitem{chen2006stochastic}
H.-F. Chen, \emph{Stochastic Approximation and its Applications}.\hskip 1em
  plus 0.5em minus 0.4em\relax Springer Science \& Business Media, 2006,
  vol.~64.

\bibitem{hazan2016introduction}
E.~Hazan \emph{et~al.}, ``Introduction to online convex optimization,''
  \emph{Foundations and Trends{\textregistered} in Optimization}, vol.~2, no.
  3-4, pp. 157--325, 2016.

\bibitem{nedic2016stochastic}
A.~Nedi{\'c} and A.~Olshevsky, ``Stochastic gradient-push for strongly convex
  functions on time-varying directed graphs,'' \emph{IEEE Transactions on
  Automatic Control}, vol.~61, no.~12, pp. 3936--3947, 2016.

\end{thebibliography}
\end{document}